\title{Pixton's double ramification cycle relations}
\author[E.~Clader]{Emily Clader}
\email{eclader@sfsu.edu}
\address{Department of Mathematics\\ San Francisco State University\\ \newline San Francisco, CA, 94132\\ United States}
\urladdr{https://sites.google.com/site/emilyclader/}
\author[F.~Janda]{Felix Janda}
\email{janda@umich.edu}
\address{Department of Mathematics\\ University of Michigan\\ \newline Ann Arbor, MI 48109\\ United States}
\urladdr{http://www-personal.umich.edu/~janda/}
\thanks{The first author acknowledges the generous support of Dr.~Max R\"ossler, the Walter Haefner Foundation, and the ETH Foundation.  The second author was partially supported by the Swiss National Science Foundation grant SNF 200021\_143274.}
\theoremstyle{plain}
 \newtheorem{theorem}{Theorem}[section]
\theoremstyle{plain}
\theoremstyle{plain}
 \newtheorem{lemma}[theorem]{Lemma}
\theoremstyle{plain}
\theoremstyle{plain}
 \newtheorem{conjecture}[theorem]{Conjecture}
\theoremstyle{definition}
\theoremstyle{definition}
\theoremstyle{definition}
 \newtheorem{remark}[theorem]{Remark}
\theoremstyle{definition}
\renewcommand{\Z}{\ensuremath{\mathbb{Z}}}
\renewcommand{\C}{\ensuremath{\mathbb{C}}}
\renewcommand{\P}{\ensuremath{\mathbb{P}}}
\newcommand{\M}{\ensuremath{\overline{\mathcal{M}}}}
\renewcommand{\O}{\ensuremath{\mathcal{O}}}
\newcommand{\ev}{\ensuremath{\textrm{ev}}}
\DeclareMathOperator{\Aut}{Aut}
\newcommand{\vir}{\ensuremath{\textrm{vir}}}
\renewcommand{\t}{\ensuremath{\mathbf{t}}}
\newcommand{\ch}{\ensuremath{\textrm{ch}}}
\newcommand{\CZr}{\ensuremath{[\mathbb{C}/\mathbb{Z}_r]}}
\newcommand{\DR}[2]{\ensuremath{R_{#1,#2}}}
\newcommand{\OmegaCZr}{\ensuremath{C^{r}}}
\newcommand{\RCZr}{\ensuremath{R^{r}_C}}
\newcommand{\Conj}[2]{\ensuremath{\Omega_{#1,#2}}}
\newcommand{\Conjr}[2]{\ensuremath{\Omega^{r}_{#1,#2}}}
\newcommand{\Conjrgg}[2]{\ensuremath{\Omega^{r\gg0}_{#1,#2}}}
\newcommand{\Zvr}[2]{\ensuremath{\widetilde{\Omega}^{r}_{#1,#2}}}
\newcommand{\Zvrgg}[2]{\ensuremath{\widetilde{\Omega}^{r\gg0}_{#1,#2}}}
\newcommand{\OmegaZvr}{\ensuremath{\widetilde{\Omega}^{r}}}
\newcommand{\RZvr}{\ensuremath{\widetilde{R}^{r}}}
\newcommand{\Conjk}[3]{\ensuremath{\Omega_{#1,#2,#3}}}
\newcommand{\one}{\ensuremath{\mathbf{1}}}
\newcommand{\Ch}{\ensuremath{\textrm{Ch}}}
\newcommand{\G}{\ensuremath{G}}
\begin{document}

\begin{abstract}
We prove a conjecture of Pixton, namely that his proposed formula for the double ramification cycle on $\M_{g,n}$ vanishes in codimension beyond $g$.  This yields a collection of tautological relations in the Chow ring of $\M_{g,n}$.  We describe, furthermore, how these relations can be obtained from Pixton's $3$-spin relations via localization on the moduli space of stable maps to an orbifold projective line.
\end{abstract}

\maketitle

%%%%%%%%%%%%%%%%%%%%   Start of main body of article

\section{Introduction}

The double ramification cycle is a class $\DR{g}{A} \in A^g(\M_{g,n})$ associated to any genus $g \geq 0$ and any collection of integers $A = (a_1, \ldots, a_n)$ whose sum is zero.  Its restriction to the moduli space $\mathcal{M}_{g,n} \subset \M_{g,n}$ of smooth curves is the class of the locus of pointed curves $(C; x_1, \ldots, x_n)$ admitting a ramified cover $f: C \rightarrow \P^1$, for which the positive $a_i$ describe the ramification profile over $0$ and the negative $a_i$ describe the ramification profile over $\infty$.  This definition can be extended to all of $\M_{g,n}$ via relative Gromov--Witten theory.

The question known as ``Eliashberg's problem'' is, vaguely, whether one can give a more explicit description of the double ramification cycle.  Toward this end, Faber and Pandharipande \cite{FP} proved that $\DR{g}{A}$ lies in the tautological ring, so Eliashberg's problem can be refined by asking for a formula in terms of kappa and psi classes and their pushforwards from boundary strata.

In \cite{Hain}, Hain provided such a formula for the restriction of $\DR{g}{A}$ to the compact-type locus $\mathcal{M}_{g,n}^{ct} \subset \M_{g,n}$, which parameterizes curves whose dual graph is a tree.  His proof relies on an alternative description of the double ramification cycle in terms of the universal Jacobian.  Namely, on a smooth curve $C$, the existence of a ramified cover as prescribed by the definition of $\DR{g}{A}$ is equivalent to the requirement that
\[\O_C(a_1[x_1] + \cdots + a_n[x_n]) \cong \O_C.\]
Thus, if
\[\rho_A : \mathcal{M}_{g,n} \rightarrow \mathcal{X}_g\]
is the map to the universal abelian variety defined by
\[(C;x_1, \ldots, x_n) \mapsto \O_C(a_1[x_1] + \cdots + a_n[x_n]) \in \text{Jac}^0_C\]
and $\mathcal{Z}_g \subset \mathcal{X}_g$ is the zero section, then
\begin{equation}
\label{smooth}
\DR{g}{A}|_{\mathcal{M}_{g,n}} =\rho_A^*[\mathcal{Z}_g].
\end{equation}
The map $\rho_A$ extends without indeterminacy to $\mathcal{M}_{g,n}^{ct}$, and Marcus and Wise \cite{MW}, generalizing a previous result of Cavalieri-Marcus-Wise \cite{CMW} for rational-tails curves, proved that the analogue of (\ref{smooth}) still holds on the compact-type locus.  On $\mathcal{X}_g$, there is a theta divisor $\Theta$ satisfying
\[\Theta^g = g![\mathcal{Z}_g].\]
Thus, we have
\[\DR{g}{A}|_{\mathcal{M}^{ct}_{g,n}} = \frac{1}{g!} (\rho_A^*\Theta)^g,\]
and Hain's formula results from an explicit calculation of $\rho_A^*\Theta$ in terms of tautological classes.

On the other hand, Grushevsky and Zakharov leveraged this same computation of $\rho_A^*\Theta$ in a different way \cite{GZ}.  Namely, they used the observation that
\[\Theta^{g+1} = 0\]
to derive tautological relations in $A^d(\mathcal{M}_{g,n}^{ct})$ for any $d>g$.

In recent work \cite{PixtonDRNotes} (see also \cite{CavalieriDR}), Pixton defined an extension of Hain's class to the entire moduli space $\M_{g,n}$.  More precisely, he extended the mixed-degree class $e^{\rho_A^*\Theta}$ to a more general formula in terms of tautological classes, denoted $\Conj{g}{A}$.  
To construct it, he first defined a family of classes $\Conjr{g}{A}$ depending on a positive integer parameter $r$, which can in some sense be viewed as ``mod $r$'' versions of Hain's expression for $R_{g,A}|_{\mathcal{M}^{ct}_{g,n}}$.  He then proved that $\Conjr{g}{A}$ is polynomial in $r$ for $r \gg 0$, and he defined $\Conj{g}{A}$ as the constant term in this polynomial.

Simultaneously generalizing both Hain's and Grushevsky-Zakharov's arguments, Pixton conjectured the following:

\begin{conjecture}[Pixton]
	\label{conj}
	Let $[ \cdot]_d$ denote the codimension-$d$ part of a class in $A^*(\M_{g,n})$.  Then $\Conj{g}{A}$ satisfies:
	\begin{enumerate}
		\item $[\Conj{g}{A}]_{g} = \DR{g}{A}$;
		\item $[\Conj{g}{A}]_{d} = 0$ for all $d > g$.
	\end{enumerate}
\end{conjecture}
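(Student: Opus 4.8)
The plan is to establish statement (2)---the vanishing $[\Conj{g}{A}]_d = 0$ for $d > g$, which is the substantive content of the conjecture, since statement (1), the identification of the codimension-$g$ part with $\DR{g}{A}$, is the double ramification cycle formula and I take it as known input---by transporting Pixton's $3$-spin relations on $\M_{g,n}$ through a virtual localization computation on a moduli space of stable maps to an orbifold projective line. The guiding principle is that the mod-$r$ classes $\Conjr{g}{A}$ are not merely combinatorial but arise geometrically as fixed-point contributions: one builds a target $\mathcal{P}_r$, a projective line carrying a $\Z/r$ orbifold structure at $0$ and $\infty$, equips the space $\M_{g,n}(\mathcal{P}_r)$ of stable maps with its natural $\C^*$-action covering the scaling action on $\mathcal{P}_r$, and reads off $\Conjr{g}{A}$ from the resulting localization graph sum.

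First I would make the identification of $\Conjr{g}{A}$ with localization data precise. The $\C^*$-fixed loci of $\M_{g,n}(\mathcal{P}_r)$ are indexed by decorated graphs whose vertices record subcurves contracted to the two orbifold fixed points and whose edges record chains of multiple covers of $\mathcal{P}_r$; the orbifold structure forces each marking and node lying over a stacky point to carry a monodromy in $\Z/r$. These monodromies are exactly the mod-$r$ weightings summed over in Pixton's definition of $\Conjr{g}{A}$, the factors $1/r^{h^1(\Gamma)}$ match the orders of the relevant orbifold automorphism groups, and Chiodo's Grothendieck--Riemann--Roch formula for the obstruction bundle reproduces the edge factors and the $\psi$-class exponentials. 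Extracting the appropriate power of the equivariant parameter from the Euler class of the virtual normal bundle then yields $\Conjr{g}{A}$ up to an explicit normalization, and taking the constant term in $r$ recovers $\Conj{g}{A}$ by the polynomiality already cited in the excerpt.

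Second I would feed in the $3$-spin relations. Pandharipande--Pixton--Zvonkine's theorem provides, for each $r$, a family of tautological relations asserting the vanishing of specific combinations of $r$-spin Chiodo-type classes in codimensions above an explicit bound. On $\M_{g,n}(\mathcal{P}_r)$ these are precisely statements about the fixed-locus contributions identified in the first step. Applying the localization decomposition to such a vanishing class and isolating the contribution that produces $\Conjr{g}{A}$, the relation should force $[\Conjr{g}{A}]_d = 0$ for $d > g$; since this holds for all $r \gg 0$ and the class is polynomial in $r$, the same vanishing then holds for the constant term, giving $[\Conj{g}{A}]_d = 0$ for $d > g$, which is statement (2).

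The main obstacle is the bookkeeping in the localization analysis together with the faithful translation of the degree bound. Concretely, one must (i) match every ingredient of the fixed-point contribution---the $\psi$-insertions and node-smoothing factors, the automorphism weights, the orbifold monodromy sums, and the equivariant Euler class of the normal bundle---to Pixton's combinatorial formula, while checking that the rubber and target-edge contributions either cancel or reorganize into tautological form; (ii) verify that the operation ``constant term in $r$'' commutes with both the localization sum and the codimension filtration, which requires uniform control of the polynomial dependence on $r$; and (iii) ensure that the codimension threshold emerging from the $3$-spin relations, after the equivariant-parameter extraction, is sharply $d > g$ rather than a weaker bound. I expect (iii), the sharp degree count, to be the most delicate point.
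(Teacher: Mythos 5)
Your route is, in outline, that of the paper's Theorem~\ref{main2} (deriving the relations from the $3$-spin relations via localization on an orbifold $\mathbb{P}^1$), not the paper's actual proof of the vanishing, and as written it has genuine gaps. The most important one is in your step 1, which you treat as bookkeeping but which is where most of the content of the paper lies. What virtual localization produces at a vertex over the stacky point is the Chern-class expression $\sum_{i}(\lambda/r)^{g-i}\phi_*\bigl(c_i(-R\pi_*\mathcal{L}_{A'})\bigr)$, where the entries of the tuple must be genuine monodromies, i.e.\ residues in $\{0,\dots,r-1\}$. Since your $A$ sums to zero and has negative entries, one must first replace $A$ by $A'=(a_1+r,a_2,\dots,a_n)$ and prove this does not change the constant term in $r$ (Lemma~\ref{claim1}), and prove that $-R\pi_*\mathcal{L}_{A'}$ is then a vector bundle (Lemma~\ref{vbundle}). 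Moreover, Pixton's class $\Conjr{g}{A}$ involves only the quadratic Bernoulli polynomial $B_2$, while Chiodo's formula for the $c_i$ involves all the $B_{d+1}$: the two expressions agree only in their constant terms in $r$, and proving this is exactly the $R$-matrix comparison occupying Sections \ref{proof}--\ref{Rmatrices} (Lemmas~\ref{claim2} and \ref{PixtonZvonkine}). Once that identification is made, the conjecture follows immediately, because $-R\pi_*\mathcal{L}_{A'}$ has rank exactly $g$, so $c_d$ of it vanishes for $d>g$; this rank computation, not the $3$-spin relations, is the source of the sharp threshold you worry about in your point (iii), and no $3$-spin input is needed at all to prove the conjecture.

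Your step 2 also contains a false intermediate claim and lacks a workable mechanism. The classes $[\Conjr{g}{A}]_d$ for $d>g$ do \emph{not} vanish for fixed large $r$ --- only the constant term in $r$ vanishes; the statement that holds for every $r$ is true only for the weighted total Chern class $C^{r}_{g,A'}$, which is a different class agreeing with $\Conjr{g}{A}$ only in the constant term. Furthermore, the Pandharipande--Pixton--Zvonkine relations are not ``for each $r$, vanishings of $r$-spin Chiodo-type classes above a codimension bound'': they are relations on $\M_{g,n}$ attached to the $3$-spin CohFT, and there is no direct sense in which one can ``apply the localization decomposition to such a vanishing class and isolate the contribution.'' The mechanism the paper uses is quite different: the double ramification cycle relations are identified with the coefficients of negative powers of $\lambda$ in a localization-defined \emph{strata-valued} field theory (Lemma~\ref{lemma1}); one then proves that this localization graph sum coincides, in the strata algebra, with the Givental--Teleman reconstruction graph sum (Lemma~\ref{comparison}, whose proof is the entire Appendix); and finally one invokes the second author's theorem (Theorem~\ref{mrelations}) that the relations arising from poles at the discriminant of \emph{any} generically semisimple CohFT coincide with the $3$-spin ideal, together with a codimension-two regularity argument in $(\lambda,q)$ (Lemma~\ref{recsum}), to conclude that the negative-$\lambda$-power coefficients lie in that ideal. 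Without these ingredients --- the strata-algebra lift, the graph-sum matching, and Janda's comparison theorem --- your outline cannot force the vanishing, and with them the argument is the paper's Theorem~\ref{main2} rather than a shortcut to the conjecture.
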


Part (1) has recently been proven by
Janda-Pandharipande-Pixton-Zvonkine \cite{JPPZ}, using localization on
a moduli space of relative stable maps to an orbifold projective line.
In particular, since $\Conj{g}{A}$ has an explicit expression in terms
of the additive generators of the tautological ring, this yields a
solution to Eliashberg's problem.

The main result of the present paper is a proof of part (2):

\begin{theorem}
\label{main}
Let $\Conj{g}{A} \in A^*(\M_{g,n})$ be the mixed-degree class defined by (\ref{DRconj}), whose codimension-$g$ component is equal to the double ramification cycle.  Then the component of $\Conj{g}{A}$ in codimension $d$ vanishes for all $d > g$.
\end{theorem}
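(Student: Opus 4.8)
The plan is to deduce the vanishing from Pixton's $3$-spin relations by transporting them through a $\mathbb{C}^*$-localization computation on a moduli space of stable maps to an orbifold projective line. I would first make the reduction explicit. Recall that $\Conjr{g}{A}$ is, up to normalization, the pushforward under the root-forgetting morphism $\epsilon$ of the total Chern class of $-R\pi_*\mathcal{L}$, where $\mathcal{L}$ is the universal $r$-th root of a line bundle $\omega^{\otimes s}(\sum_i a_i[x_i])$ determined by $A$ and a twist $s$ over the moduli space of $r$-th roots. Chiodo's Grothendieck--Riemann--Roch formula shows this class is tautological with components polynomial in $r$ for $r\gg 0$, which is exactly the polynomiality the excerpt attributes to Pixton and which defines $\Conj{g}{A}$ as the $r^0$-coefficient. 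Part (2) of the conjecture is therefore equivalent to the assertion that, in each codimension $d>g$, the constant term in $r$ of this polynomial-in-$r$ class vanishes; this is the form of Theorem~\ref{main} I would aim to establish.

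Next I would build the localization geometry. Let $\mathcal{P}$ denote the projective line equipped with a $\mathbb{Z}_r$-orbifold structure adapted to $A$, carrying the standard $\mathbb{C}^*$-action with isolated fixed points $0$ and $\infty$, and consider the moduli space of stable maps to $\mathcal{P}$ together with the induced $r$-spin/root data. The virtual localization formula writes a chosen equivariant tautological class as a sum over decorated bipartite graphs: vertices are contracted components mapping to $0$ or $\infty$ and carry Hodge- and Chiodo-type vertex classes, and edges encode the multiple covers of the $\mathbb{C}^*$-orbit, contributing inverse Euler classes with poles in the equivariant parameter $t$. Two features drive the argument. First, the equivariant pushforward to $\M_{g,n}$ is a polynomial in $t$, so the total coefficient of every negative power of $t$ in the graph sum must vanish, which is the standard source of relations. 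Second, after summing over the edge and orbifold data, the vertex contribution concentrated over a single pole reassembles, up to invertible equivariant weights and powers of $r$, into precisely the Chiodo expression defining $\Conjr{g}{A}$.

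The decisive input is then Pixton's $3$-spin relations. These relations encode the fact that the geometric Witten $3$-spin class of the $A_2$ cohomological field theory is concentrated in its expected codimension, so that its Givental--Teleman reconstruction -- a sum over stable graphs of $R$-matrix contributions -- must vanish identically in every higher codimension; this is a family of tautological relations on the spaces of spin curves. I would pull these relations back along the morphism recording the spin structure on the maps moduli space and substitute them, graph by graph, into the localization identity of the previous step. Matching codimensions across the graph sum, the surviving term in codimension $d$ is, up to the invertible factors noted above, the codimension-$d$ part of $\Conjr{g}{A}$; the $3$-spin vanishing together with the $t$-polynomiality then forces this part to vanish for $d>g$, and passing to the $r^0$-coefficient yields Theorem~\ref{main}.

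The main obstacle I expect is the triple bookkeeping that glues these layers: one must check that the localization vertex classes truly reproduce the Chiodo class underlying $\Conjr{g}{A}$, with the correct $r$-powers and $\psi$- and edge-weights, that the relevant polynomiality in $r$ is uniform enough to survive extraction of the constant term, and that the edge contributions and unstable vertices introduce no spurious classes in codimension $>g$. A subtler point is obtaining the \emph{sharp} threshold $d>g$: the $3$-spin relations a priori control only codimensions above a bound set by the expected dimension of the Witten class, and one must track how this bound is converted, through the residue extraction and the degree shifts of the localization graph sum, into the exact cutoff at $g$ rather than a weaker linear estimate.
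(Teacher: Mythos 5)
Your proposal is, in outline, the route the paper uses to prove Theorem~\ref{main2} (that the double ramification cycle relations lie in the $3$-spin ideal), not its proof of Theorem~\ref{main}, and as written it has two genuine gaps. First, you ``recall'' that $\Conjr{g}{A}$ is, up to normalization, the pushforward of the \emph{total Chern class} of $-R\pi_*\mathcal{L}$ from a space of $r$th roots. This cannot be recalled; it is a substantial part of what must be proved. Pixton's class is defined by a weighted graph sum, and what Zvonkine's reformulation provides (after proof --- Lemma~\ref{PixtonZvonkine}, itself requiring an $R$-matrix comparison) is the class $\phi_*(e^{r^2c_1(-R\pi_*\mathcal{L}_A)})$, involving only $c_1$. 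For general $A$ the complex $-R\pi_*\mathcal{L}_A$ is not a vector bundle, its Chern classes do not vanish above degree $g$, and the exponential-of-$c_1$ class and the total Chern class are genuinely different. The paper must first reduce, via polynomiality in $A$ (Lemma~\ref{polynomiality}), to tuples with exactly one negative entry, replace $A$ by $A'=(a_1+r,a_2,\ldots,a_n)$ so that $R^0\pi_*\mathcal{L}_{A'}=0$ and $-R\pi_*\mathcal{L}_{A'}$ becomes a vector bundle of rank exactly $g$ (Lemmas~\ref{claim1} and \ref{vbundle}), and only then compare $e^{r^2c_1}$ with $c_{(r^2)}$ at the level of constant terms in $r$ (Lemma~\ref{claim2}). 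None of these steps appears in your plan, yet the localization vertex factor you want to match is exactly the total-Chern-class one, so the gap is essential. Note also that once these identifications are made, Theorem~\ref{main} follows with no $3$-spin input at all: the weighted total Chern class of a rank-$g$ bundle manifestly vanishes in degrees above $g$. That is the paper's actual (and far more elementary) proof, and it also answers your worry about the sharp threshold $d>g$: the cutoff is the rank of $-R\pi_*\mathcal{L}_{A'}$, equivalently the exponent in the $\lambda^{g-d}$ factor of the localization vertex term, and has nothing to do with the codimension bound of the Witten class.

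Second, your mechanism for importing the $3$-spin relations --- pulling them back ``along the morphism recording the spin structure on the maps moduli space'' and substituting them graph by graph into the localization identity --- does not exist. The $A_2$/$3$-spin CohFT lives on a different moduli problem, and there is no morphism along which its relations pull back to $\M_{g,n}(\P[r,1],d)$. The actual link (Section~\ref{3spin} and the Appendix) is indirect: one (i) lifts the equivariant Gromov--Witten CohFT of $\P[r,1]$ to a strata-valued field theory in two ways, by Givental--Teleman reconstruction ($\Omega^{\text{rec},\t}$) and by localization plus Chiodo's formula ($\Omega^{\text{loc},\t}$); (ii) invokes Theorem~\ref{mrelations}, which says that the pole-cancellation relations of \emph{any} generically semisimple but not everywhere semisimple CohFT generate exactly the $3$-spin ideal $\mathcal{P}_{g,n}$, so that $\Omega^{\text{rec},\t}$ is regular modulo $\mathcal{P}_{g,n}$ (Lemma~\ref{recsum}); and (iii) proves the technically demanding identity $\Omega^{\text{loc},\t}=\Omega^{\text{rec},\t}$ in the strata algebra (Lemma~\ref{comparison}, occupying the entire Appendix), which is what transfers regularity in $\lambda$ modulo $\mathcal{P}_{g,n}$ to the localization sum whose $\lambda^{g-d}$-coefficients are, by Lemma~\ref{lemma1}, the double ramification cycle relations. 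Without (ii) and (iii) your argument has no way to conclude; with them, it becomes the paper's proof of Theorem~\ref{main2}, which is a strictly stronger statement than Theorem~\ref{main} and is considerable overkill if the vanishing alone is the goal.
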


To prove the theorem, we make use of a geometric reformulation of $\Conj{g}{A}$ due to Zvonkine.  Namely, we consider a moduli space $\M^{0/r}_{g,A}$ of pointed stable curves $(C;x_1, \ldots, x_n)$ equipped with a line bundle $L$ satisfying
\[L^{\otimes r} \cong \O\left(-\sum_{i=1}^n a_i[x_i]\right).\]
There is a map
\[\phi: \M^{0/r}_{g,A} \rightarrow \M_{g,n}\]
forgetting the line bundle $L$, and if $\pi: \mathcal{C} \rightarrow \M^{0/r}_{g,A}$ denotes the universal curve and $\mathcal{L}_A$ the universal line bundle, set
\begin{equation}
\label{Zvr}
\Zvr{g}{A}:= \frac{1}{r^{2g-1}}\phi_*\left(e^{r^2c_1(-R\pi_*\mathcal{L}_A)}\right).
\end{equation}
Like Pixton's class, $\Zvr{g}{A}$ is also polynomial in $r$ for $r\gg 0$, and the constant term in this polynomial is also equal to $\Conj{g}{A}$.

From here, the idea of the proof of Theorem~\ref{main} is to replace $A$ by a tuple $A'$ in such a way that $-R\pi_*\mathcal{L}_{A'}$ becomes a vector bundle but the constant term in $r$ of (\ref{Zvr}) remains unchanged.  Then, we replace the class $e^{r^2c_1(-R\pi_*\mathcal{L}_{A'})}$ with the weighted total Chern class
\begin{equation}
\label{total}
c_{(r^2)}(-R\pi_*\mathcal{L}_{A'}) = 1 + r^2c_1(-R\pi_*\mathcal{L}_{A'}) + r^4 c_2(-R\pi_*\mathcal{L}_{A'}) + \cdots.
\end{equation}
Once again, this replacement only affects higher-order terms in $r$; the proof uses the fact that both $\Zvr{g}{A}$ and the modification via (\ref{total}) form Cohomological Field Theories (CohFTs), and the $R$-matrices can be explicitly calculated by Chiodo's Grothendieck-Riemann-Roch formula \cite{ChiodoTowards}.  The rank of $-R\pi_*\mathcal{L}_{A'}$ is easy to compute, and for certain choices of $A$, the modification $A'$ can be chosen so that this rank equals precisely $g$.  For such $A$, the fact that (\ref{total}) manifestly vanishes in cohomological degrees greater than the rank proves the theorem.  Then, using the fact that $\Conj{g}{A}$ is polynomial in $A$ (as observed by Pixton \cite{PixtonForthcoming}), we deduce the theorem in general.

\begin{remark}
The tautological relations coming from vanishing of the high-degree terms of (\ref{total}) were previously observed in \cite{Clader}.  As was explained in that paper, they can alternatively be derived from the existence of the nonequivariant limit in the equivariant virtual cycle of $\M_{g,n}(\CZr, 0)$, a perspective that is useful in what follows.
\end{remark}

Pixton also conjectured that the same vanishing result holds for a more general class, which we denote by $\Conjk{g}{A}{k}$.  This class can also be described by a Hain-type formula, as we explain in Section \ref{powers}, or, in the geometric reformulation, it can be defined by considering the class
\begin{equation}
\label{eq:LAk}
\frac{1}{r^{2g-1}} \phi_*(e^{r^2c_1(-R\pi_*\mathcal{L}_{A,k})}),
\end{equation}
where $\mathcal{L}_{A,k}$ is the universal line bundle over the moduli space $\M_{g,A}^{k/r}$ of pointed stable curves with a line bundle $L$ satisfying
\begin{equation}
\label{Lrk}
L^{\otimes r} \cong \omega_{\log}^{\otimes k} \left(-\sum_{i=1}^n a_i[x_i]\right).
\end{equation}
Once again, \eqref{eq:LAk} is polynomial in $r$ for $r \gg 0$, and $\Omega_{g,A,k}$ is defined as the constant term in this polynomial.  The $k=1$ case, in particular, is related to $r$-spin theory. We prove in Theorem~\ref{thm2} that Pixton's conjecture for $\Omega_{g,A,k}$ is also true, by essentially the same proof as Theorem~\ref{main}.

\begin{remark}
This more general vanishing is connected to relations studied by Randal-Williams \cite{RW, ERW}, building on ideas of Morita \cite{Morita1, Morita2}.  Specifically, Randal-Williams works on the $n$th fiber product $\mathcal{C}_g^n$ of the universal curve over $\mathcal{M}_g$, and his relations are the image of the restriction of the relations in Theorem~\ref{thm2} under the birational morphism $\mathcal{M}_{g,n}^{rt} \rightarrow \mathcal{C}_g^n$, where $\mathcal{M}_{g,n}^{rt}$ is the moduli space of rational-tails curves.
\end{remark}

It has been conjectured that the $3$-spin relations constructed in
\cite{PPZ} generate all tautological relations on the moduli space of
curves, so one should expect the double ramification cycle relations
of Theorem~\ref{main} to follow from these.  This is indeed the case:

\begin{theorem}
\label{main2}
The double ramification cycle relations are a consequence of Pixton's $3$-spin relations.
\end{theorem}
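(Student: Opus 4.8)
The plan is to present the double ramification cycle relations of Theorem~\ref{main} as a special case of the general mechanism by which tautological relations are extracted from semisimple Cohomological Field Theories through Givental's $R$-matrix action, and then to invoke the comparison theorem of the second author asserting that every relation obtained in this way is already a consequence of Pixton's $3$-spin relations \cite{PPZ}. As indicated in the remark following Theorem~\ref{main}, the vanishing of the high-degree terms of (\ref{total}) can be repackaged as the existence of the nonequivariant limit of the equivariant virtual cycle of $\M_{g,n}(\CZr,0)$; this is the viewpoint I would adopt, since it exhibits the relations as polynomiality constraints on an $R$-matrix reconstruction of an equivariantly defined, semisimple CohFT.

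First, I would make the CohFT explicit. Both $\Zvr{g}{A}$ and its modification by the weighted total Chern class (\ref{total}) form CohFTs, as used in the proof of Theorem~\ref{main}, and these are semisimple because they are built from Chiodo's classes for $\CZr$, whose suitably shifted Frobenius algebra is semisimple. By the Givental--Teleman classification, such a CohFT is the $R$-matrix action on its topological part, where the $R$-matrix is computed explicitly by Chiodo's Grothendieck--Riemann--Roch formula \cite{ChiodoTowards}. Under this reconstruction, the statement that the class vanishes in codimension $d > g$ --- equivalently, beyond the rank of $-R\pi_*\mathcal{L}_{A'}$ --- becomes a statement purely about the $R$-matrix action: the coefficients of the tautological monomials produced beyond the expected degree must all vanish, and these vanishings are precisely the double ramification cycle relations.

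Second, I would match these against the $3$-spin relations. The relations of \cite{PPZ} are exactly those produced by the same $R$-matrix reconstruction applied to the Witten $3$-spin class, and the comparison theorem I wish to invoke states that they imply all relations obtained from \emph{any} semisimple CohFT by this procedure, provided the $R$-matrix entries satisfy the requisite polynomiality. The substance of this step is therefore to verify that the Chiodo $R$-matrix relevant to $\Zvr{g}{A}$ lies in the class of $R$-matrices to which the comparison applies, and to track the passage to the constant term in $r$: since $\Conj{g}{A}$ is defined as the $r$-constant coefficient of a polynomial, and since the $3$-spin relations hold in $A^*(\M_{g,n})$ independently of $r$, the membership of the $r$-dependent relations in the $3$-spin span should descend to their constant term.

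The step I expect to be the main obstacle is precisely this last matching. One must bring the shifted CohFT and $R$-matrix arising from the root construction into the normalized form required by the comparison theorem, and check that the polynomiality in $r$ is compatible with the polynomiality hypotheses on the $R$-matrix. In particular, care is needed to ensure that extracting the constant term in $r$ introduces no relations outside the $3$-spin span, and that the degree bound ``rank $=g$'' maps onto the degree bound built into the $3$-spin relations rather than onto a weaker consequence of it.
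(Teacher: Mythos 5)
There is a genuine gap, and it is precisely the one the paper flags in the paragraph preceding Section \ref{StratFTs}. The comparison theorem you want to invoke (\cite[Theorem 3.3.6]{JandaThesis}, stated as Theorem~\ref{mrelations} in the paper) does not say that the $3$-spin relations imply ``all relations obtained from any semisimple CohFT by $R$-matrix reconstruction.'' It says that for a \emph{generically} semisimple, not everywhere semisimple CohFT, the relations arising from \emph{poles at the discriminant locus} --- i.e., from the existence of non-semisimple limits in the shift parameter $\t$ --- coincide with the $3$-spin ideal. The double ramification cycle relations are of a different nature: they are the coefficients of \emph{negative powers of the equivariant parameter} $\lambda$ in the reconstruction/localization expression for the theory of $\CZr$, i.e., they express the existence of the \emph{non-equivariant} limit. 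A CohFT such as $\OmegaCZr_{g,n}$, which is semisimple at its given basepoint, produces no relations at all from reconstruction alone --- reconstruction is exact there --- so Theorem~\ref{mrelations} simply does not apply to the mechanism generating your relations. Moreover, one cannot repair this by shifting the $\CZr$ theory to a non-semisimple point: as the paper notes, the shifted theory of $\CZr$ no longer admits a non-equivariant limit, so the discriminant-pole relations it produces cannot be related back to the DR relations. Your closing paragraph correctly identifies ``the last matching'' as the obstacle, but the proposed fix (normalizing the Chiodo $R$-matrix and tracking $r$-polynomiality) does not address it, because the mismatch is between two different kinds of limits ($\lambda\to 0$ versus $d_{\lambda,q}(\t)\to 0$), not between two normalizations of $R$-matrices.

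The paper's proof supplies exactly the missing bridge, and it is geometric rather than formal: replace $\CZr$ by the compact orbifold $X=\P[r,1]$, whose Gromov--Witten theory exists non-equivariantly, so its CohFT $\Omega^{\t}_{g,n}$ is a priori regular in $\lambda$, $q$, and $\t$ (Lemma~\ref{semisimple}). One then builds \emph{two} lifts of this CohFT to the strata algebra: $\Omega^{\text{rec},\t}_{g,n}$ via Givental--Teleman reconstruction, and $\Omega^{\text{loc},\t}_{g,n}$ via virtual localization together with Chiodo's formula at the vertices. Theorem~\ref{mrelations} plus a codimension-two argument (Lemma~\ref{recsum}) shows that $\Omega^{\text{rec},\t}_{g,n}$ is regular in \emph{all} parameters, including $\lambda$, after passing to $\mathcal{S}_{g,n}/\mathcal{P}_{g,n}$. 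The technical core of the proof --- Lemma~\ref{comparison}, proved in the Appendix by a careful term-by-term matching of the two graph sums --- identifies $\Omega^{\text{loc},\t}_{g,n}$ with $\Omega^{\text{rec},\t}_{g,n}$ in the strata algebra. Since the DR relations are, by Lemma~\ref{lemma1}, generated by the negative-$\lambda$-power coefficients of $\Omega^{\text{loc},\mathbf{0}}_{g,n}|_{q=0}$ (the degree-zero vertex contributions, which are exactly the $\CZr$ classes), the regularity of the reconstruction side modulo $\mathcal{P}_{g,n}$ forces those coefficients into the $3$-spin ideal. Without the detour through $\P[r,1]$ and the localization-versus-reconstruction comparison, your argument has no way to convert $\lambda$-pole relations into discriminant-pole relations, which is the entire content of the theorem.
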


To prove Theorem~\ref{main2}, we study the equivariant Gromov--Witten theory of a
projective line $\P[r,1]$ with a single orbifold point of isotropy
$\Z_r$.  The associated CohFT is generically semisimple, so, as
explained in \cite{Janda}, tautological relations can be obtained by
applying Givental-Teleman reconstruction to express the CohFT as a
graph sum and then using the existence of the limit as one moves
toward a nonsemisimple point.  The relations thus obtained are
equivalent to the $3$-spin relations, via rather general machinery of
the second author.

On the other hand, the same CohFT can be expressed as a graph sum in a different way, via localization and Chiodo's formula.  A careful matching reveals that the two graph sums agree, and the existence of the nonsemisimple limit in the Givental-Teleman sum implies the existence of the nonequivariant limit in the localization sum.  Thus, upon restriction to the substack of degree-zero maps to $\P[r,1]$, one recovers the double ramification cycle relations in the form presented in \cite{Clader}.

\subsection{Outline of the paper}

We begin, in Section \ref{preliminaries}, by reviewing the definition of the double ramification cycle and Pixton's conjectural formula in more detail.  In Section \ref{GRR}, we recall Chiodo's Grothendieck-Riemann-Roch formula for the Chern characters of the direct image of the universal line bundle on moduli spaces of $r$th roots and use it to make the formula for $\Zvr{g}{A}$ more explicit.  Section \ref{proof} reduces the proof of Theorem~\ref{main} to a comparison of $\Zvr{g}{A}$ with the weighted total Chern class described in (\ref{total}), and this comparison is accomplished in Section \ref{Rmatrices} by describing both classes in terms of the action of an explicit $R$-matrix on a Topological Field Theory, thus completing the proof of the main theorem and its generalization.  Finally, in Section \ref{3spin}, we recast the double ramification cycle relations in terms of maps to an orbifold projective line, and use this perspective to show how they can be deduced from the $3$-spin relations.  Details of the localization on $\P[r,1]$, including a matching of the localization and reconstruction graph sums, are relegated to the Appendix.

\subsection{Acknowledgments}

The authors are especially indebted to R. Pandharipande, A. Pixton, Y. Ruan, and D. Zvonkine for numerous invaluable conversations and guidance.
Thanks are also due to A. Chiodo, J. Gu\'er\'e, T. Milanov and D. Ross for useful conversations and comments.  Much of the authors' initial understanding of the double ramification cycle was shaped by lectures of R. Cavalieri given at the workshop ``Modern Trends in Gromov--Witten Theory'' at the Universit\"at Hanover, as well as by lectures of A. Pixton at the ETH Z\"urich, both of which occurred in September 2014.

\section{Preliminaries on the double ramification cycle and Pixton's conjectures}
\label{preliminaries}

The exposition that follows is based on notes of Cavalieri \cite{CavalieriDR} and Pixton \cite{PixtonDRNotes}.

\subsection{The double ramification cycle}

Fix a genus $g \geq 0$ and a collection of integers $A = (a_1, \ldots, a_n)$ whose sum is zero.  Define a cycle on $\mathcal{M}_{g,n}$ as the class of the locus of pointed curves $(C;x_1, \ldots, x_n)$ for which there exists a ramified cover $f: C \rightarrow \P^1$ satisfying:
\begin{itemize}
\item $f^{-1}(0) = \{x_i \; | \; a_i > 0\}$,
\item the ramification profile over $0$ is the partition $\{a_i \; | a_i > 0\}$,
\item $f^{-1}(\infty) = \{x_i \; | \; a_i < 0\}$,
\item the ramification profile over $\infty$ is the partition $\{|a_i| \; | \; a_i < 0\}$.
\end{itemize}
We denote by $\mu$ the partition consisting of the positive $a_i$ and by $\nu$ the partition consisting of the absolute values of the negative $a_i$; these are partitions of the same size since the sum of all $a_i$'s is zero.  Further, denote $n_0 = \#\{a_i = 0\}$; note that no restriction is placed on the $x_i$ for which $a_i=0$.

To extend the class described above to the entire moduli space $\M_{g,n}$, we compactify the space of such ramified covers by allowing degenerations of the target $\P^1$.  More specifically, there is a map
\[\pi: \M_{g,n_0}(\P^1; \mu, \nu)^{\sim} \rightarrow \M_{g,n}\]
from the moduli space of rubber relative stable maps to $\P^1$, and we set
\[\DR{g}{A} := \pi_*[\M_{g,n_0}(\P^1;\mu, \nu)^{\sim}]^{\vir} \in A^g(\M_{g,n}).\]
See \cite{FP} for a further discussion of rubber relative stable maps to the projective line.

This class has an alternative description when restricted to the locus $\mathcal{M}_{g,n}^{ct} \subset \M_{g,n}$ consisting of curves of compact type--- that is, curves whose dual graph is a tree.  As explained in the introduction, the Jacobian $\text{Jac}^0_C$ of a compact-type curve is a (compact) abelian variety, and the map
\[\rho_A: \mathcal{M}_{g,n} \rightarrow \mathcal{X}_g\]
to the universal abelian variety defined by
\[(C;x_1, \ldots, x_n) \mapsto \O_C(a_1[x_1] + \cdots + a_n[x_n]) \in \text{Jac}^0_C\]
can be extended to $\mathcal{M}_{g,n}^{ct}$.  It is straightforward to see that, if $\mathcal{Z}_g \subset \mathcal{X}_g$ denotes the zero section, then the class $\rho_A^*[\mathcal{Z}_g]$ coincides with the double ramification cycle when one restricts to $\mathcal{M}_{g,n}$.  By the results of \cite{CMW} and \cite{MW}, this is also true for the extension to $\mathcal{M}_{g,n}^{ct}$.

On the other hand, there is a theta divisor $\Theta \in A^1(\mathcal{X}_g)$, which restricts in each fiber of the universal family to the prescribed polarization on the corresponding abelian variety, and which is trivial when restricted to the zero section.  Using results of Deninger and Murre \cite{DM} (see \cite{Voisin} and \cite{GH} for further exposition), one can show that this divisor satisfies
\[\Theta^g = g! [\mathcal{Z}_g]\]
and $\Theta^{g+1} = 0$.

Hain \cite{Hain} computed $\rho_A^*\Theta$ in terms of tautological classes on $\mathcal{M}_{g,n}^{ct}$, which, via the above observations, implies a formula for the restriction of the double ramification cycle.  The result of his computation is:
\begin{equation}
\label{Hains}
\DR{g}{A}^{ct}  = \frac{1}{2^g g!} \left( -\frac{1}{2} \sum_{\substack{ 0 \leq l \leq g \\ I\subset \{1, \ldots, n\}}} a_I^2 \Delta_{l,I} \right)^g,
\end{equation}
where
\[a_I = \sum_{i \in I} a_i\]
and $\Delta_{l,I}$ is defined as the class of the closure of the locus of curves with an irreducible component of genus $l$ containing the marked points in $S$ and an irreducible component of genus $g-l$ containing the remaining marked points.  (In the unstable cases where such curves do not exist, it is defined by convention: $\Delta_{0, \{i\}} = \Delta_{g, [n] \setminus \{i\}}= -\psi_i$, and $\Delta_{0, \emptyset} = 0$.)

\subsection{Pixton's conjectural formula}
\label{conjecture}

The starting point for Pixton's generalization of Hain's formula (\ref{Hains}) to all of $\M_{g,n}$ is the observation that, by packaging the expressions for each power of $\rho_A^*\Theta$ into the mixed-degree class $e^{\rho_A^*\Theta}$, one obtains a ``compact-type Cohomological Field Theory''.  That is, if $V$ is an infinite-dimensional vector space with generators $e_a$ indexed by integers $a$, then the association
\[V^{\otimes n} \rightarrow H^*(\mathcal{M}_{g,n}^{ct})\]
\[e_{a_1} \otimes \cdots \otimes e_{a_n} \mapsto \DR{g}{A}^{ct}\]
satisfies all of the axioms of a CohFT {\it except} for the gluing axiom along nonseparating nodes, which do not occur in the compact-type moduli space.  We refer the reader to \cite{KM} or \cite{PPZ} for a careful discussion of CohFTs and their axioms.

According to the results of Givental and Teleman \cite{GiventalSemisimple, Teleman}, a semisimple CohFT can be obtained via the action of an $R$-matrix on a Topological Field Theory; the result is an expression for the CohFT as a summation over graphs.  A similar procedure works for $\DR{g}{A}^{ct}$, and it can be used to write Hain's formula as a graph sum.  Namely, by expanding the exponential and using intersection theory on $\M_{g,n}$, one finds that
\begin{equation}
\label{Pixtonct}
e^{\rho_A^*\Theta}=  \sum_{\Gamma \in G^{ct}_{g,n}}\frac{\iota_{\Gamma*}}{|\Aut(\Gamma)|} \left(\prod_{i=1}^n e^{\frac{1}{2}a_i^2\psi_i} \hspace{-0.5cm} \prod_{e=(h,h') \in E(\Gamma)} \hspace{-0.4cm}\frac{1 - e^{-\frac{1}{2}w(h)w(h')(\psi_h + \psi_{h'})}}{\psi_h + \psi_{h'}}\right).
\end{equation}
Here, $G^{ct}_{g,n}$ denotes the set of decorated dual graphs of curves in $\mathcal{M}_{g,n}^{ct}$.  The set of edges of a graph $\Gamma$ is denoted $E(\Gamma)$, and each edge is written $e = (h,h')$ for half-edges $h$ and $h'$.  The classes $\psi_h$ and $\psi_{h'}$ are the first Chern classes of the cotangent line bundles at the two branches of the node corresponding to $e$, and $\iota_{\Gamma}$ is the gluing map
\begin{equation}
\label{iotagamma}
\iota_{\Gamma}: \prod_{\text{vertices }v} \M_{g(v),\text{val}(v)} \rightarrow \M_{g,n},
\end{equation}
in which $g(v)$ is the genus of $v$ and $\text{val}(v)$ the valence (that is, the total number of half-edges and legs incident to $v$).  

Associated to each such graph $\Gamma$ is a unique weight function
\[w: H(\Gamma) \rightarrow \Z\]
on the set $H(\Gamma)$ of half-edges and legs, determined by:
\begin{enumerate}[(\bf{{W}}1)]
\item $w(h_i) = a_i$ for each leg $h_i$ associated to a marked point $x_i$;
\item if $e = (h,h')$, then $w(h) + w(h') = 0$;
\item for each vertex $v$, the sum of the weights of half-edges and legs incident to $v$ equals zero.
\end{enumerate}
The fact that these conditions uniquely determine $w$ is a consequence of the fact that $\Gamma$ is a tree.

Now, if one attempts to na\"ively generalize the above formula to the full moduli space by allowing $\Gamma$ to be any dual graph for a curve in $\M_{g,n}$, then there will no longer be a unique choice of weight function $w$ satisfying {\bf (W1) -- (W3)}.  Indeed, any loop in the dual graph permits infinitely many choices of weights, so the sum of the expressions in (\ref{Pixtonct}) over all possible weight functions will not converge.

To avoid such infinite sums, Pixton introduces an additional parameter $r$ and restricts to weight functions
\[w: H(\Gamma) \rightarrow \{0, 1, \ldots, r-1\}\]
satisfying the following three conditions:
\begin{enumerate}[(\bf{{R}}1)]
\item $w(h_i) \equiv a_i \mod r$ for each half-edge $h_i$ associated to a marked point $x_i$;
\item if $e = (h,h')$, then $w(h) + w(h') \equiv 0 \mod r$;
\item for each vertex $v$, the sum of the weights of half-edges incident to $v$ is zero modulo $r$.
\end{enumerate}
There are clearly only finitely many such weight functions associated to any dual graph $\Gamma$.  Set $\Conjr{g}{A}$ to be the class
\[\sum_{\Gamma,w} \frac{1}{|\Aut(\Gamma)|} \frac{1}{r^{h_1(\Gamma)}}\iota_{\Gamma*}\left(\prod_{i=1}^n e^{\frac{1}{2}a_i^2\psi_i} \prod_{e=(h,h')} \frac{1 - e^{-\frac{1}{2}w(h)w(h')(\psi_h + \psi_{h'})}}{\psi_h + \psi_{h'}}\right),\]
where $\Gamma$ ranges over all dual graphs of curves in $\M_{g,n}$, and $w$ ranges over weight functions satisfying {\bf (R1) -- (R3)}.

As observed by Pixton, the class $\Omega_{g,A}^r$ satisfies a number of polynomiality properties:

\begin{lemma}[Pixton \cite{PixtonForthcoming}]
  \label{polynomiality}
  For fixed $g$ and $A$, the class $\Omega_{g,A}^r$ is polynomial in
  $r$ for $r \gg 0$.  Moreover, the constant term in this polynomial
  is itself polynomial in the arguments $A$.

  More generally, let $\Gamma$ be a dual graph with half-edges $h_1,
  \dotsc, h_N$ and let $W$ be a polynomial in $N$ variables.  Then the
  sum
  \begin{equation*}
    \sum_w W(w(h_1), \dotsc, w(h_N)),
  \end{equation*}
  where $w$ ranges over weight functions satisfying {\bf (R1) --
    (R3)}, is a polynomial in $r$ for $r \gg 0$.  This polynomial is divisible by $r^{h_1(\Gamma)}$ and its lowest-degree term depends on $a_1, \dotsc, a_n$ polynomially.

\end{lemma}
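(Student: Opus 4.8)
The plan is to prove the more general (second) statement by Fourier analysis over $\Z/r\Z$, detecting the congruence conditions \textbf{(R1)--(R3)} via the standard root-of-unity test for divisibility. By linearity of the sum in $W$, I may assume $W=\prod_j w(h_j)^{d_j}$ is a single monomial. I would first eliminate \textbf{(R2)} by orienting each edge $e\in E(\Gamma)$ and taking as free variable the weight $x_e\in\{0,\dotsc,r-1\}$ of its tail half-edge, so that the head half-edge carries weight $\{-x_e\}_r$, where $\{\,\cdot\,\}_r$ denotes reduction into $\{0,\dotsc,r-1\}$; the legs are fixed to $\{a_i\}_r$ by \textbf{(R1)}. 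Writing $\zeta=e^{2\pi i/r}$ and detecting each vertex congruence \textbf{(R3)} through $\mathbf{1}[D_v\equiv 0]=\frac1r\sum_{t_v=0}^{r-1}\zeta^{t_vD_v}$, where $D_v$ is the integer sum of the weights incident to $v$, the sum over weight functions becomes
\begin{equation*}
\sum_w W(w)=\frac{1}{r^{|V(\Gamma)|}}\sum_{t\in\{0,\dotsc,r-1\}^{V(\Gamma)}}\zeta^{\,\sum_v t_v b_v}\prod_{e\in E(\Gamma)}P_e\bigl(s_e(t)\bigr),
\end{equation*}
where $b_v=\sum_{i:\,h_i\text{ at }v}\{a_i\}_r$, $s_e(t)=t_{\mathrm{tail}(e)}-t_{\mathrm{head}(e)}$, and $P_e(s)=\sum_{x=0}^{r-1}x^{d_{\mathrm{tail}}}\{-x\}_r^{\,d_{\mathrm{head}}}\zeta^{sx}$. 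The piecewise factor $\{-x\}_r^{\,d_{\mathrm{head}}}$ agrees with $(r-x)^{d_{\mathrm{head}}}$ except at $x=0$, a correction supported on a lower-dimensional sublattice that I would absorb by applying the same argument to a graph with one fewer edge.

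The analysis then rests on the exponential power sums $\sum_{x=0}^{r-1}x^d\zeta^{sx}$. Applying $(z\,d/dz)^d$ to $\sum_{x=0}^{r-1}z^x=(z^r-1)/(z-1)$ and evaluating at $z=\zeta^s$ (using $\zeta^{sr}=1$) shows that, for $s$ in a fixed residue class modulo $r$, this sum is a polynomial in $r$; it has degree $d+1$ with leading coefficient $1/(d+1)$ when $s\equiv 0$, and degree at most $d$ otherwise — indeed it vanishes identically when $d=0$ and $s\not\equiv 0$. Hence each $P_e(s_e(t))$ is polynomial in $r$ once the residue class of $s_e(t)$ is fixed. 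Grouping the characters $t$ by the partition of $V(\Gamma)$ into the level sets of $t$ — equivalently, by the set of edges with $s_e(t)\equiv 0$ — the phase and the product $\prod_e P_e$ depend on $t$ only through this combinatorial type, whose population is itself polynomial in $r$; summing over types yields that $\sum_w W(w)$ is a polynomial in $r$ for $r\gg 0$.

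It then remains to extract the degree and the $a$-dependence. For the base case $W=1$ one has $P_e(s)=\sum_{x=0}^{r-1}\zeta^{sx}$, which vanishes unless $s\equiv 0$; since $\Gamma$ is connected this forces $t$ constant, the phase is $\zeta^{c\sum_i\{a_i\}_r}=1$ (as $\sum_i a_i=0$ makes $\sum_i\{a_i\}_r$ a multiple of $r$), and one recovers $\sum_w 1=r^{-|V(\Gamma)|}\cdot r\cdot r^{|E(\Gamma)|}=r^{h_1(\Gamma)}$ exactly — the sharp model for divisibility by $r^{h_1(\Gamma)}$ in general, since off-balanced edges contribute exponential sums that lower the $r$-degree and, in the lowest orders, force the vanishing of many $s_e(t)$. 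Finally, for $r>\max_i|a_i|$ one has $\{a_i\}_r=a_i+r\,\epsilon_i$ with $\epsilon_i\in\{0,1\}$ the indicator of $a_i<0$; substituting this affine-in-$a_i$ expression makes every coefficient of the polynomial in $r$ a polynomial in $a_1,\dotsc,a_n$, and the lowest-degree term in $r$, retaining only the $r^0$ pieces $a_i$ of the leg weights, has coefficient polynomial in the $a_i$, as claimed.

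The hard part will be the bookkeeping in this last step: showing rigorously that no terms of $r$-degree below $h_1(\Gamma)$ survive and that the coefficient of the lowest surviving power of $r$ is a genuine polynomial in $a$, rather than merely a quasi-polynomial. This amounts to checking that the cancellations producing the exact count $\sum_w 1=r^{h_1(\Gamma)}$ persist, with the correct $r$-degree accounting, once the monomial weight $W$ and the sawtooth leg terms $\{a_i\}_r$ are present — which is where the grouping of the character sum by combinatorial type must be controlled most carefully.
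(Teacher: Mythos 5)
There is nothing in the paper to compare against here: the lemma is quoted from Pixton's then-unpublished work \cite{PixtonForthcoming} and no proof is given (a complete argument, due to Pixton, can be found in the appendix of \cite{JPPZ}). So the only question is whether your argument stands on its own, and it does not: the two claims at its center are false, and they are exactly where the difficulty of the lemma lives. First, it is not true that for a nonzero residue $s$ the sum $\sum_{x=0}^{r-1}x^d\zeta^{sx}$ is ``a polynomial in $r$ of degree at most $d$.'' Your own generating-function computation gives, for $d=1$,
\[
\sum_{x=0}^{r-1}x\zeta^{sx}=\frac{r}{\zeta^{s}-1},
\]
and the factor $1/(\zeta^{s}-1)$ is not a constant coefficient: it depends on $r$ through $\zeta=e^{2\pi i/r}$, and for fixed integer $s$ it grows like $r/(2\pi i s)$, so the sum grows like $r^{2}$, not $r$. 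Second, and consequently, the summand of your character sum is \emph{not} constant on your ``combinatorial types'': within the type where an edge is unbalanced, $P_e(s_e(t))$ genuinely varies with the value of $s_e(t)$, so ``population of a type times its common value'' does not compute the sum.

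A concrete counterexample to the degree bookkeeping built on these claims: take $\Gamma$ to be two vertices joined by two edges ($h_1(\Gamma)=1$, no legs) and $W=w(h_1)w(h_2)$, the product of the two tail weights, so that the true answer is $\sum_{x=0}^{r-1}x(r-x)_{x\neq 0}=(r-1)r(r+1)/6$. In your formula the balanced sector $s\equiv 0$ contributes $r(r-1)^2/4$, while the unbalanced sector contributes
\[
r\sum_{s=1}^{r-1}\frac{1}{(1-\zeta^{s})^{2}}=\frac{r(r-1)(5-r)}{12},
\]
which has the \emph{same} degree in $r$ as the balanced one --- your argument asserts it is strictly lower --- and only the sum of the two pieces is the correct polynomial. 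The displayed identity, and for general graphs and monomials its multi-character analogues carrying the phases $\zeta^{\sum_v t_v b_v}$ (through which all the $a_i$-dependence enters as roots of unity), is precisely the content that must be proved: that sums over \emph{all} nontrivial characters of such root-of-unity rational expressions are polynomial in $r$, divisible by $r^{h_1(\Gamma)}$, with lowest term polynomial in $A$. Your proposal assumes this, via the two false claims, rather than proving it. The Fourier-analytic route is not hopeless --- such identities can be extracted, for instance, from symmetric-function manipulations with the roots of $(z^{r}-1)/(z-1)$ --- but the part you defer as ``bookkeeping'' is in fact the entire theorem once those claims are removed.
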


Given Lemma \ref{polynomiality}, Pixton's conjectural formula for the double ramification cycle can now be defined:
\begin{equation}
\label{DRconj}
\Conj{g}{A} := \Conjrgg{g}{A}\bigg|_{r=0},
\end{equation}
where $\Conjrgg{g}{A}$ is defined as the class $\Conjr{g}{A}$ for any $r$ large enough so that this class is polynomial in $r$.

\subsection{Geometric reformulation}
\label{geom}

A different perspective on $\Conj{g}{A}$, first suggested by Zvonkine, will be more useful for our proof of Theorem~\ref{main}.  Let $\M^{0/r}_{g,A}$ be the moduli space\footnote{Here, a compactification of the moduli space of such objects on smooth curves must be chosen.  There are several ways to compactify, as summarized in Section 1.1.2 of \cite{SSZ}; for our purposes, we will allow orbifold structure at the nodes of $C$ and require only that $L$ is an orbifold line bundle.} parameterizing pointed stable curves $(C; x_1, \ldots, x_n)$ equipped with a line bundle $L$ satisfying
\begin{equation}
\label{Lr}
L^{\otimes r}  \cong \O\left(-\sum_{i=1}^n a_i[x_i]\right).
\end{equation}

There is a map
\[\phi: \M^{0/r}_{g,A} \rightarrow \M_{g,n}\]
forgetting the line bundle $L$ and the orbifold structure; this map has degree $r^{2g-1}$, as explained, for example, in \cite{Chiodo}.  If $\pi: \mathcal{C}_A \rightarrow \M^{0/r}_{g,A}$ denotes the universal curve and $\mathcal{L}_A$ denotes the universal line bundle on $\mathcal{C}_A$, then the class
\begin{equation}
\label{ZvrgA}
\Zvr{g}{A}: = \frac{1}{r^{2g-1}} \phi_*(e^{-r^2 c_1(R\pi_{*}\mathcal{L}_A)})
\end{equation}
is also polynomial in $r$ for $r \gg 0$, and
\[\Conj{g}{A} = \Zvrgg{g}{A}\bigg|_{r=0}.\]
The fact that this definition of $\Conj{g}{A}$ agrees with the previous one (and that $\Zvr{g}{A}$, like $\Conjr{g}{A}$, is eventually polynomial in $r$) can be proved by noting that $\Zvr{g}{A}$ forms a semisimple CohFT on a vector space $V = \C\{e_0, e_1, \ldots, e_{r-1}\}$, expressing it as a dual graph sum using the Givental-Teleman reconstruction of semisimple CohFTs, and comparing the resulting dual graph sums using Lemma~\ref{polynomiality}.  We return to this argument in Lemma~\ref{PixtonZvonkine} below.

\subsection{Generalization to powers of the log canonical}
\label{powers}

Both of these definitions of $\Omega_{g,A}$ are readily generalized to allow for powers of the log canonical.  To do so, fix an integer $k$ and assume that $A = (a_1, \ldots, a_n)$ satisfies
\[\sum_{i=1}^n a_i  = k(2g-2+n).\]
Let $\M_{g,A}^{k/r}$ be the moduli space parameterizing pointed stable curves $(C;x_1, \ldots, x_n)$ equipped with a line bundle $L$ satisfying
\begin{equation}
\label{Lk}
L^{\otimes r} \cong \omega_{\log}^{\otimes k} \left(-\sum_{i=1}^n a_i[x_i]\right).
\end{equation}
As above, there is a degree-$r^{2g-1}$ map
\[\phi: \M_{g,A}^{k/r} \rightarrow \M_{g,n}\]
forgetting $L$ and the orbifold structure on the curve.  Set
\[\widetilde{\Omega}^r_{g,A,k}:= \frac{1}{r^{2g-1}} \phi_*\left(e^{-r^2c_1(R\pi_*\mathcal{L}_{A,k})}\right),\]
where $\pi: \mathcal{C}_{A,k} \rightarrow \M_{g,A}^{k/r}$ is the universal curve and $\mathcal{L}_{A,k}$ the universal line bundle.

A generalization of Pixton's class can be defined by
\[\Omega_{g,A,k} = \widetilde{\Omega}^{r\gg0}_{g,A,k}\bigg|_{r=0},\]
in the language of Section \ref{geom}.  Alternatively, in Pixton's original formulation, the generalized class is defined by replacing condition {\bf(R3)} above by
\begin{enumerate}[(\bf{{R}}3$'$)]
\setcounter{enumi}{2}
\item for each vertex $v$, the sum of the weights of half-edges incident to $v$ is $k(2g(v) - 2 + \text{val}(v))$ modulo $r$
\end{enumerate}
and setting $\Omega^r_{g,A,k}$ to be the class
\begin{multline*}
\sum_{\Gamma,w} \frac{1}{|\Aut(\Gamma)|} \frac{1}{r^{h_1(\Gamma)}}\cdot\\
\iota_{\Gamma*}\left(\prod_v e^{-\frac{1}{2}k^2\kappa_1} \prod_{i=1}^n e^{\frac{1}{2}a_i^2\psi_i} \prod_{e=(h,h')} \frac{1 - e^{-\frac{1}{2}w(h)w(h')(\psi_h + \psi_{h'})}}{\psi_h + \psi_{h'}}\right),
\end{multline*}
where $\Gamma$ ranges over all dual graphs of curves in $\M_{g,n}$, $v$ ranges over vertices of $\Gamma$, and $w$ ranges over weight functions satisfying {\bf (R1), (R2)}, and {\bf (R3$'$)}.  Pixton has also proven an analogue of Lemma~\ref{polynomiality} for $\Omega^r_{g,A,k}$:
\begin{lemma}[Pixton \cite{PixtonForthcoming}]
  \label{kpolynomiality}
  For fixed $g$ and $A$, the class $\Omega^r_{g,A,k}$ is polynomial in
  $r$ for $r \gg 0$.  Moreover, the constant term in this polynomial
  is itself polynomial in $k$ and the arguments $A$.

  More generally, let $\Gamma$ be a dual graph with half-edges $h_1,
  \dotsc, h_N$ and let $W$ be a polynomial in $N$ variables.  Then the
  sum
  \begin{equation*}
    \sum_w W(w(h_1), \dotsc, w(h_N)),
  \end{equation*}
  where $w$ ranges over weight functions satisfying {\bf (R1), (R2)},
  and {\bf (R3$'$)}, is a polynomial in $r$ for $r \gg 0$ and is divisible by $r^{h_1(\Gamma)}$.
\end{lemma}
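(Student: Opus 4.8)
The plan is to deduce Lemma~\ref{kpolynomiality} from its already-established counterpart, Lemma~\ref{polynomiality}, by a graph-augmentation trick, after first reducing the statement about the mixed-degree class to the purely combinatorial weighted-sum statement. To make the reduction of part (1) to part (2), note that there are finitely many dual graphs $\Gamma$ of curves in $\M_{g,n}$, and since those curves are connected, each such $\Gamma$ is connected; so it suffices to fix one $\Gamma$ at a time. For fixed $\Gamma$, the only part of the integrand
\[
\prod_v e^{-\frac12 k^2\kappa_1}\prod_{i=1}^n e^{\frac12 a_i^2\psi_i}\prod_{e=(h,h')}\frac{1-e^{-\frac12 w(h)w(h')(\psi_h+\psi_{h'})}}{\psi_h+\psi_{h'}}
\]
that depends on $w$ is the edge product, whose expansion in the $\psi$-classes has coefficients that are polynomials in the products $w(h)w(h')$. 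Because the tautological ring of each $\M_{g(v),\mathrm{val}(v)}$ is finite-dimensional, only finitely many monomials survive under $\iota_{\Gamma*}$, and the coefficient of each is $\sum_w W$ for a fixed polynomial $W$ in the weights $w(h_1),\dots,w(h_N)$. Thus the divisibility in part (2) cancels the prefactor $r^{-h_1(\Gamma)}$, and part (2) implies that $\Omega^r_{g,A,k}$ is polynomial in $r$; tracking the $r^{h_1(\Gamma)}$-coefficient gives part (1).

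The key step is to reduce the weighted sum over weight functions satisfying \textbf{(R1)}, \textbf{(R2)}, \textbf{(R3$'$)} on $\Gamma$ to one over weight functions satisfying \textbf{(R1)}, \textbf{(R2)}, \textbf{(R3)} on an augmented graph. Form $\Gamma^+$ by attaching to each vertex $v$ one extra leg $\ell_v$ with fixed marking-value the integer $b_v := -k\bigl(2g(v)-2+\mathrm{val}_\Gamma(v)\bigr)$. A weight function $w^+$ on $\Gamma^+$ obeying \textbf{(R1)}, \textbf{(R2)} restricts to one, $w$, on $\Gamma$ obeying \textbf{(R1)}, \textbf{(R2)}, and since $w^+(\ell_v)\equiv b_v\pmod r$, the condition \textbf{(R3)} at $v$ on $\Gamma^+$, namely $\sum_{h\text{ at }v}w(h)+w^+(\ell_v)\equiv 0\pmod r$, is equivalent to $\sum_{h\text{ at }v}w(h)\equiv k(2g(v)-2+\mathrm{val}_\Gamma(v))\pmod r$, which is exactly \textbf{(R3$'$)} at $v$ on $\Gamma$. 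As the value of $\ell_v$ does not enter $W$, restriction is a bijection under which the summands agree, so
\[
\sum_{w}W\bigl(w(h_1),\dots,w(h_N)\bigr)=\sum_{w^+}W\bigl(w^+(h_1),\dots,w^+(h_N)\bigr),
\]
the left sum over \textbf{(R1)}, \textbf{(R2)}, \textbf{(R3$'$)} and the right over \textbf{(R1)}, \textbf{(R2)}, \textbf{(R3)}.

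To conclude, observe that the total of the marking-values of $\Gamma^+$ is $\sum_i a_i+\sum_v b_v$, and that $\sum_v\bigl(2g(v)-2+\mathrm{val}_\Gamma(v)\bigr)=2g-2+n$ for a connected graph (using $|E|=|V|-1+h_1(\Gamma)$ and $g=\sum_v g(v)+h_1(\Gamma)$); combined with the standing hypothesis $\sum_i a_i=k(2g-2+n)$, this total vanishes, so $\Gamma^+$ satisfies the hypotheses of Lemma~\ref{polynomiality}. Since attaching legs does not change the first Betti number, $h_1(\Gamma^+)=h_1(\Gamma)$, and the second part of Lemma~\ref{polynomiality} applied to $\Gamma^+$ gives that the right-hand sum is polynomial in $r$ for $r\gg0$ and divisible by $r^{h_1(\Gamma)}$, proving part (2). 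For part (1), its lowest-degree ($r^{h_1}$) coefficient depends polynomially on the marking-values of $\Gamma^+$, which are the $a_i$ and $b_v=-k(2g(v)-2+\mathrm{val}_\Gamma(v))$, hence polynomially (indeed linearly) in $k$ and $A$; so the constant term is polynomial in $k$ and $A$.

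The one point requiring care is that Lemma~\ref{polynomiality} is phrased for leg-values drawn from the global tuple $A$, whereas $\Gamma^+$ carries the additional fixed integers $b_v$. This is harmless, since the proof of Lemma~\ref{polynomiality} uses only that the marking-values are fixed integers summing to zero, and so applies verbatim; nonetheless this is where the genuine content sits, and for a self-contained argument I would proceed as follows. Using \textbf{(R2)} to retain one variable $x_e\in\Z/r\Z$ per edge, conditions \textbf{(R1)} and \textbf{(R3$'$)} become an inhomogeneous system $\partial x\equiv c\pmod r$ with $\partial$ the signed incidence matrix of $\Gamma$ and $c_v=k(2g(v)-2+\mathrm{val}_\Gamma(v))-\sum_{\text{legs at }v}a_i$. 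The incidence matrix of a connected graph is totally unimodular, so its Smith normal form has all nonzero invariant factors equal to $1$; hence $\partial\bmod r$ surjects onto the sum-zero subgroup with kernel of size exactly $r^{h_1(\Gamma)}$ for \emph{every} $r$, and the consistency $\sum_v c_v=0$ makes the solution set a nonempty coset of that size. This settles the case $W\equiv1$ and the divisibility. Parametrizing the coset by loop variables $y\in\{0,\dots,r-1\}^{h_1}$ via a spanning tree then writes each weight as $\langle\ell_j(y)\rangle_r$, the representative in $\{0,\dots,r-1\}$ of an affine-integral form $\ell_j$ with linear coefficients in $\{0,\pm1\}$. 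I expect the remaining summation to be the real obstacle: one must partition the cube by the finitely many (for $r\gg0$, $r$-independent) values of the floors $\lfloor\ell_j(y)/r\rfloor$, sum the resulting honest polynomials over lattice points in dilated integral polytopes by Faulhaber/Bernoulli formulas, and verify that the periodic Ehrhart contributions cancel so that the answer is a genuine polynomial in $r$ --- not merely a quasi-polynomial --- for $r\gg0$, with $r^{h_1(\Gamma)}$-coefficient manifestly polynomial in $k$ and $A$.
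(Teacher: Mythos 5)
The paper itself contains no proof of this lemma: both Lemma~\ref{kpolynomiality} and Lemma~\ref{polynomiality} are quoted as results of Pixton \cite{PixtonForthcoming}, so there is no internal argument to compare against, and any proof you give is necessarily a ``different route.'' Relative to the paper's logical structure --- in which Lemma~\ref{polynomiality} is available as a black box --- your reduction is correct and complete. The augmentation trick works: attaching at each vertex $v$ a leg with value $b_v = -k(2g(v)-2+\mathrm{val}(v))$ turns \textbf{(R3$'$)} on $\Gamma$ into \textbf{(R3)} on $\Gamma^+$, and since the new leg weights are forced modulo $r$ by \textbf{(R1)}, restriction is a bijection on weight functions preserving the summand $W$. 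Your two consistency checks are exactly the right ones: $h_1(\Gamma^+)=h_1(\Gamma)$, and $\sum_v\bigl(2g(v)-2+\mathrm{val}(v)\bigr)=2g-2+n$ so that the augmented leg values sum to zero. Note that your route in fact delivers slightly more than the stated part (2): because the last clause of Lemma~\ref{polynomiality} gives polynomial dependence of the lowest-degree term on the leg values of $\Gamma^+$, and the $b_v$ are linear in $k$, you obtain polynomial dependence of that term on $k$ and $A$ --- which is precisely what part (1) needs but which the statement of part (2) of Lemma~\ref{kpolynomiality} omits. This derivation of the $k$-twisted lemma from the untwisted one is genuine added content, since the paper treats the two as independent external facts.

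Two caveats, neither fatal. First, your application requires that part (2) of Lemma~\ref{polynomiality} hold for an arbitrary dual graph with arbitrary integer leg values (not drawn from a sum-zero tuple); this is how the paper states it, so the use is legitimate, and you are right to flag it. Second, your closing ``self-contained'' sketch (incidence matrix, Smith normal form, Ehrhart-type summation over the loop variables) is not a proof, as you acknowledge: the hard point --- that the sum is an honest polynomial rather than a quasi-polynomial for $r\gg 0$ --- is exactly the content of Pixton's result, and your argument does not eliminate the dependence on it. As a proof of Lemma~\ref{kpolynomiality} conditional on Lemma~\ref{polynomiality}, however, the proposal stands.
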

We can therefore define $\Omega_{g,A,k}$ as the constant term of the
polynomial in $r$ corresponding to $\Omega^r_{g,A,k}$.  When $k=0$, we recover the previous definitions of $\Omega_{g,A}$.  Until otherwise stated, we will always assume that $k=0$ in what follows.

\section{Chiodo's Grothendieck--Riemann--Roch formula}
\label{GRR}

In this section, we recall Chiodo's formula for the Chern characters of the direct image $R\pi_*\mathcal{L}_A$, which, in particular, can be used to write (\ref{ZvrgA}) explicitly in terms of tautological classes when $r$ is sufficiently large.

Fix a tuple of integers $A =(a_1, \ldots, a_n)$.  In fact, one need not assume that the sum of the $a_i$ is zero, as was the case above, but only that
\[\sum_{i=1}^n a_i \equiv 0 \mod r;\]
this more general version will be important later.  Let $\pi$ and $\mathcal{L}_A$ be as above.  Then Chiodo's formula states:
\begin{align*}
&\ch_d(R\pi_{*}\mathcal{L}_A) = \frac{B_{d+1}(0)}{(d+1)!}\kappa_d - \sum_{i=1}^n \frac{B_{d+1}(\frac{a_i}{r})}{(d+1)!}\psi_i^d \\
&\hspace{0.75cm}+\frac{r}{2} \sum_{\substack{0 \leq l \leq g\\ I \subset [n]}} \frac{B_{d+1}\left(\frac{q_{l,I}}{r}\right)}{(d+1)!} i_{(l,I)*}(\gamma_{d-1}) + \frac{r}{2} \sum_{q=0}^{r-1} \frac{B_{d+1}(\frac{q}{r})}{(d+1)!}j_{(irr,q)*}(\gamma_{d-1}),
\end{align*}
using the presentation given in Corollary 3.1.8 of \cite{ChiodoTowards}.

Let us summarize the notation appearing in this formula.  First, $B_{d+1}(x)$ are the Bernoulli polynomials, defined by the generating function
\[\frac{te^{xt}}{e^t -1} = \sum_{n=0}^{\infty} B_n(x) \frac{t^n}{n!}.\]
The $\kappa$ and $\psi$ classes are defined as usual, using the cotangent line to the coarse underlying curve.

Let $Z_{(l,I)}$ be the substack of $\mathcal{C}_A$ consisting of nodes separating the curve $C$ into a component of genus $l$ containing the marked points in $I$ and a component of genus $g-l$ containing the other marked points, subject to the requirement that stable curves of this type exist.  Let $Z_{(l,I)}'$ be the two-fold cover of $Z_{(l,I)}$ given by a choice of branch at each such node.  Then
\[i_{(l,I)}: Z'_{(l, I)} \rightarrow \M^{0/r}_{g,A}\]
is the composition of this two-fold cover with the inclusion into $\mathcal{C}_A$ and projection.  The index $q_{l,I} \in \{0,1, \ldots, r-1\}$ is the multiplicity of $L$ at the chosen branch, which is defined by
\[q_{l,I} + \sum_{i \in I} a_i \equiv 0 \mod r.\]
If $\psi$ is the first Chern class of the line bundle over $Z'_{(l,I)}$ whose fiber is the cotangent line to the coarse curve at the chosen branch of the node, and $\hat{\psi}$ is the first Chern class of the bundle whose fiber is the cotangent line to the coarse curve at the opposite branch, then $\gamma_d$ is defined by
\begin{equation}
\label{gamma}
\gamma_d  = \frac{\psi^{d+1} + (-1)^d \hat\psi^{d+1}}{\psi + \hat\psi}= \sum_{i+j=d} (-\psi)^i \hat{\psi}^j.
\end{equation}

Finally, let $Z'_{(\text{irr},q)}$ be given by nonseparating nodes in $\mathcal{C}_A$ together with a choice of branch, such that the multiplicity of the line bundle $L$ at the chosen branch is equal to $q$.  We have morphisms
\[j_{(\text{irr},q)}: Z'_{(irr,q)} \rightarrow \M^{0/r}_{g,A}\]
given, as before, by the two-fold cover, inclusion into the universal curve, and projection.  The class $\gamma_d$ is again defined by (\ref{gamma}).

\section{Comparison with the total Chern class}
\label{proof}

Fix a collection of integers $A = (a_1, \ldots, a_n)$ whose sum is zero.  Suppose that $n > 0$ and exactly one $a_i$ is negative; without loss of generality, we may assume that $a_1 < 0$ and $a_i \geq 0$ for all $i \geq 2$.  Choose any $r > \text{max}\{|a_i|\}$, and set
\begin{equation}
\label{A'}
A' = (a_1', \dotsc, a_n') = (a_1 + r, a_2, \ldots, a_n),
\end{equation}
which is now a collection whose sum is $r$ and for which every element is nonnegative.

The definitions of the moduli space $\M^{0/r}_{g,A}$ and the class $\Zvr{g}{A}$ extend verbatim to tuples of integers whose sum is not necessarily zero but merely zero modulo $r$.  In particular, $\Zvr{g}{A'}$ is defined, and in fact, its constant term in $r$ is the same as that of $\Zvr{g}{A}$:

\begin{lemma}
\label{claim1}
If $A$ and $A'$ are as above, then
\[\left.\Zvrgg{g}{A}\right|_{r=0} = \left.\Zvrgg{g}{A'}\right|_{r=0}.\]
\begin{proof}
Via Chiodo's formula, $\Zvr{g}{A}$ can be written as
\[ \frac{1}{r^{2g-1}}\phi_*\left(\exp\left(-r^2\frac{B_2(0)}{2}\kappa_1 + r^2\sum_{i=1}^n\frac{B_2(\frac{a_i}{r})}{2} \psi_i - r^3\sum_{\Gamma} \frac{B_2(\frac{q_{\Gamma}}{r})}{2} [\Gamma]\right)\right),\]
where the sum is over one-noded graphs $\Gamma$ decorated with a multiplicity $q_{\Gamma}$ at the node, and $[\Gamma]$ is the corresponding boundary divisor.  (Note that since $B_2(x) = B_2(1-x)$, we need not distinguish between the two choices of branch.)

Because $B_2$ is a degree-two polynomial, the replacement $A \mapsto A'$ only affects the higher-order terms in $r$ in the argument of $\phi_*$.  Some care is required to ensure that the same is true after applying $\phi_*$, since the degree of $\phi$ on a codimension-$d$ boundary stratum is in general equal to $r^{2g-1-d}$ due to the presence of ``ghost'' automorphisms.  This is indeed the case, though, because the replacement of $A$ by $A'$ does not change the boundary term.
\end{proof}
\end{lemma}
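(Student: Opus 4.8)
The plan is to take as given the expansion of $\Zvr{g}{A}$ furnished by Chiodo's formula and to track precisely how the class inside the exponential changes under the substitution $A \mapsto A'$. Write $X(A) := -r^2 c_1(R\pi_*\mathcal{L}_A)$, so that $\Zvr{g}{A} = \tfrac{1}{r^{2g-1}}\phi_*\exp(X(A))$. Specializing Chiodo's formula to $d = 1$ (where $\gamma_0 = 1$), the class $X(A)$ is a sum of three kinds of terms: a multiple of $\kappa_1$ independent of $A$, the marked-point terms $\tfrac{1}{2}r^2 B_2(\tfrac{a_i}{r})\psi_i$, and the boundary terms $-\tfrac{1}{2}r^3 B_2(\tfrac{q_\Gamma}{r})[\Gamma]$ summed over one-noded graphs $\Gamma$. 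I would reduce the lemma to computing the difference $X(A') - X(A)$ and then controlling its image under $\tfrac{1}{r^{2g-1}}\phi_*$.

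First I would compute $X(A') - X(A)$ term by term. The $\kappa_1$ term drops out, and the marked-point terms with $i \geq 2$ are unchanged since $a_i' = a_i$. Because $B_2(x) = x^2 - x + \tfrac16$ has degree two, one has $r^2 B_2(\tfrac{a_1 + r}{r}) = a_1^2 + a_1 r + \tfrac{r^2}{6}$ and $r^2 B_2(\tfrac{a_1}{r}) = a_1^2 - a_1 r + \tfrac{r^2}{6}$; these agree in their $r$-independent part, and with the prefactor $\tfrac12$ their difference contributes the single summand $a_1 r\,\psi_1$ to the $\psi_1$-coefficient. The boundary terms are unaffected altogether, because each multiplicity $q_\Gamma$ is determined by a congruence of the form $q_\Gamma + \sum_{i \in I} a_i \equiv 0 \bmod r$ that sees $A$ only modulo $r$, and $a_1' = a_1 + r \equiv a_1$. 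The conclusion is the clean identity $X(A') - X(A) = a_1 r\,\psi_1$: an explicit multiple of $r$ times a marked-point class, involving none of the boundary divisors $[\Gamma]$.

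With this identity in hand, I would factor $\exp(X(A')) - \exp(X(A)) = \exp(X(A))\bigl(e^{a_1 r\psi_1} - 1\bigr) = r\,\exp(X(A))\,Q(r)$, where $Q(r) = a_1\psi_1 + O(r)$ is a polynomial in $r$ with tautological coefficients (the exponential series terminates because $\psi_1$ is nilpotent). Applying $\tfrac{1}{r^{2g-1}}\phi_*$ and invoking the polynomiality in $r$ of such pushforwards (the geometric analogue of Lemma~\ref{polynomiality}), the class $\tfrac{1}{r^{2g-1}}\phi_*\bigl(\exp(X(A))Q(r)\bigr)$ is itself polynomial in $r$ for $r \gg 0$, so $\Zvr{g}{A'} - \Zvr{g}{A}$ is $r$ times a polynomial in $r$ and hence vanishes at $r = 0$. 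I expect the main obstacle to be precisely the interaction of $\phi_*$ with powers of $r$: since $\phi$ has degree only $r^{2g-1-d}$ over a codimension-$d$ stratum, the pushforward can promote higher-order-in-$r$ terms into the constant term, so the bare observation that $X(A') - X(A) = O(r)$ is not enough. The resolution is that the extra factor of $r$ sits on the marked-point class $\psi_1$ and carries no dependence on the nodal multiplicities $q_\Gamma$, while the boundary part of $X(A)$---the only source of the multiplicity summations responsible for this $r$-promotion---is identical for $A$ and $A'$. Thus those summations are unchanged, the explicit factor of $r$ survives the pushforward, and the difference vanishes at $r = 0$ as claimed.
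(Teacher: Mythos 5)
Your proposal is correct and takes essentially the same route as the paper's proof: both expand $\Zvr{g}{A}$ via Chiodo's formula for $c_1$, observe that the nodal/boundary terms depend on $A$ only modulo $r$ and hence are identical for $A$ and $A'$, and dispose of the ghost-automorphism subtlety (degree $r^{2g-1-d}$ of $\phi$ on codimension-$d$ strata) by noting that the entire difference is the pulled-back class $a_1 r\psi_1$, whose explicit factor of $r$ therefore survives the pushforward. Your write-up is merely more explicit than the paper's---computing the difference exactly and factoring the exponential---but the substance is the same.
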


We have thus re-expressed Pixton's conjectural formula (for $A$ satisfying the above conditions) as
\[\Conj{g}{A} =\left.\Zvrgg{g}{A'}\right|_{r=0}.\]
The advantage of having replaced $A$ by $A'$ is the following:

\begin{lemma}
\label{vbundle}
If $A$ and $A'$ are as above, then
\[R^0\pi_*\mathcal{L}_{A'} = 0,\]
and hence $-R\pi_*\mathcal{L}_{A'}$ is a vector bundle.
\begin{proof}
Let $(C;x_1, \ldots, x_n;L)$ be an element of $\M^{0/r}_{g,A'}$, so
\[L^{\otimes r} \cong \O\left(-\sum_{i=1}^n a_i'[x_i]\right).\]
Let $s$ be a section of $L$, and suppose that there exists an irreducible component $C'$ of $C$ on which $s \not \equiv 0$.

Since
\[\deg(L|_{C'}) = -\frac{1}{r}\sum_{i \in C'} a_i' \leq 0\]
as an orbifold line bundle, we must have $a_i' = 0$ for all $i \in C'$.  Moreover, $L$ cannot have nontrivial orbifold structure at any of the nodes of $C'$, since $s$ would necessarily vanish at such a node and hence would be identically zero on $C'$.  It follows that $L|_{C'}$ is pulled back from a degree-zero bundle on the coarse underlying curve $|C'|$.  Indeed, this bundle must be $\O_{|C'|}$, for otherwise $L|_{C'}$ would have no nonzero section.

We conclude that $s$ is nowhere zero on $C'$, and in particular, that it does not vanish at any of the nodes at which $C'$ meets the rest of $C$.  Thus, none of the components meeting $C'$ can contain a marked point $x_i$ for which $a_i' \neq 0$.  Continuing inductively, we find that $a_i' = 0$ for all $i$, a contradiction.
\end{proof}
\end{lemma}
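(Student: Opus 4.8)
The plan is to argue by contradiction at the level of a single point of the moduli space. Since $R^0\pi_*\mathcal{L}_{A'}$ vanishes precisely when $H^0(C,L) = 0$ for every $(C;x_1,\ldots,x_n;L)$ in $\M^{0/r}_{g,A'}$, it suffices to show that no such $L$ admits a nonzero global section. The key numerical input is that $L$ has strictly negative total orbifold degree: from $L^{\otimes r} \cong \O(-\sum_i a_i'[x_i])$ and $\sum_i a_i' = r$ one reads off $\deg L = -1$. So I would suppose a nonzero section $s$ exists and derive a contradiction by tracking where it can fail to vanish.

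First I would restrict attention to an irreducible component $C'$ on which $s \not\equiv 0$. Because the divisor defining $L^{\otimes r}$ is supported at the marked points, restriction to $C'$ gives $\deg(L|_{C'}) = -\frac{1}{r}\sum_{i \in C'} a_i' \le 0$ as an orbifold line bundle. On the other hand, an irreducible orbifold curve carrying a nonzero section of a line bundle forces that bundle to have nonnegative orbifold degree, so in fact $\deg(L|_{C'}) = 0$; since every $a_i' \ge 0$, this means $a_i' = 0$ for all $i \in C'$. I would then check that $L$ can carry no nontrivial orbifold structure at the nodes of $C'$, because a section is obliged to vanish at an orbifold point with nontrivial isotropy, which would force $s|_{C'} \equiv 0$. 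Hence $L|_{C'}$ is pulled back from the coarse curve $|C'|$, where it is a degree-zero line bundle with a nonzero section and therefore trivial; in particular $s$ is nowhere vanishing on $C'$.

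The remaining step is a propagation argument using connectedness of $C$. Since $s$ vanishes nowhere on $C'$, it is nonzero at each node joining $C'$ to a neighboring component $C''$; matching the two branches at the node shows $s|_{C''} \not\equiv 0$, so the previous paragraph applies verbatim to $C''$ and gives $a_i' = 0$ for all $i \in C''$. Iterating across the connected dual graph of $C$, I conclude $a_i' = 0$ for every $i$, contradicting $\sum_i a_i' = r > 0$. This establishes $H^0(C,L) = 0$ and hence $R^0\pi_*\mathcal{L}_{A'} = 0$; it follows that $-R\pi_*\mathcal{L}_{A'} = R^1\pi_*\mathcal{L}_{A'}$, whose fibers have constant rank by cohomology and base change, so it is a vector bundle. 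I expect the only real subtlety to be the orbifold bookkeeping---the degree computation and the vanishing of sections at nontrivial orbifold points---while the geometric core, propagating nonvanishing of $s$ along a connected curve, is entirely robust.
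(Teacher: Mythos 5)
Your proposal is correct and follows essentially the same argument as the paper: restrict to a component where the section is nonzero, use the orbifold degree bound and triviality of the orbifold structure at nodes to conclude the section is nowhere vanishing there, then propagate across the connected dual graph to force all $a_i' = 0$, contradicting $\sum_i a_i' = r$. The only additions beyond the paper's proof are the explicit pointwise reduction via cohomology and base change and the observation $\deg L = -1$, both of which are harmless elaborations of steps the paper leaves implicit.
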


By Lemma \ref{vbundle}, the weighted total Chern class
\[c_{(r^2)}(-R\pi_*\mathcal{L}_{A'}) := 1 + r^2c_1(-R\pi_*\mathcal{L}_{A'})  + r^{4}c_2(-R\pi_*\mathcal{L}_{A'}) + \cdots \]
is well-defined.  It can be expressed in terms of Chern characters as
\[c_{(r^2)}(-R\pi_*\mathcal{L}_{A'}) = \exp\left(\sum_{d \geq 1} (-r^2)^d (d-1)! \text{ch}_d(R\pi_*\mathcal{L}_{A'})\right),\]
and hence, it also admits an explicit description via Chiodo's formula.  Let
\[C^{r}_{g,A'} : = \frac{1}{r^{2g-1}} \phi_*\big(c_{(r^2)}(-R\pi_*\mathcal{L}_{A'})\big).\]

\begin{lemma}
\label{claim2}
One has
\[ \left.\Zvrgg{g}{A'}\right|_{r=0} = \left. C^{r\gg0}_{g,A'} \right|_{r=0}.\]
\end{lemma}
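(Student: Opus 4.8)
The plan is to realize both $\Zvr{g}{A'}$ and $C^r_{g,A'}$ as the genus-$g$ components of semisimple CohFTs on $V = \C\{e_0, \ldots, e_{r-1}\}$ and to compare them through their Givental--Teleman graph sums. Each class has the form $\frac{1}{r^{2g-1}}\phi_*\big(P(-R\pi_*\mathcal{L}_{A'})\big)$ for a multiplicative characteristic class $P$ of the genuine vector bundle $-R\pi_*\mathcal{L}_{A'}$ (a vector bundle by Lemma~\ref{vbundle}), namely $P = e^{r^2 c_1}$ in the case of $\Zvr{g}{A'}$ and $P = c_{(r^2)}$ in the case of $C^r_{g,A'}$. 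Since $R\pi_*\mathcal{L}_{A'}$ restricts compatibly to the boundary of $\M^{0/r}_{g,A'}$ and both choices of $P$ are multiplicative under direct sums, the pushforwards satisfy the CohFT gluing axioms, just as for $\Zvr{g}{A}$ in Lemma~\ref{PixtonZvonkine}. Moreover the two CohFTs share a common Topological Field Theory: the degree-zero part of $P(-R\pi_*\mathcal{L}_{A'})$ equals $1$ in either case, so the Frobenius structure on $V$ is read off from the degree-zero genus-zero three-point classes and does not involve $P$; in particular it is semisimple.

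First I would apply Givental--Teleman reconstruction to express each CohFT as a sum over stable graphs $\Gamma$, the contribution of $\Gamma$ being built from the common TFT at the vertices, a translation vector, a factor $R^{-1}(\psi)$ at each leg, and an edge bivector assembled from $R$. Chiodo's formula (Section~\ref{GRR}) makes both $R$-matrices explicit: in the basis indexed by the multiplicities $q$, the diagonal entry of $\log R^{-1}(z)$ is assembled from the Chern-character terms of the exponent, the coefficient of $z^d$ being governed by $B_{d+1}(q/r)$ together with the $\kappa_d$ contribution $B_{d+1}(0)$.

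The key point is that the two exponents
\[ -r^2\,\ch_1(R\pi_*\mathcal{L}_{A'}) \qquad\text{and}\qquad \sum_{d\geq 1}(-r^2)^d (d-1)!\,\ch_d(R\pi_*\mathcal{L}_{A'}) \]
agree in their $\ch_1$-parts, since $(-r^2)^1\,0! = -r^2$ and $c_1 = \ch_1$, and differ only in the terms with $d \geq 2$. These terms carry the coefficients $(-r^2)^d(d-1)!\,B_{d+1}(q/r)/(d+1)!$, whose expansions in $r$ begin in degree $d-1 \geq 1$ (and in degree $2d$ for the $\kappa_d$ contribution, since $B_{d+1}(0)$ is constant). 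Consequently both the $R$-matrix and the translation vector of $C^r_{g,A'}$ differ from those of $\Zvr{g}{A'}$ only by terms divisible by $r$.

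Finally I would compare the two graph sums contribution-by-contribution. For a fixed graph $\Gamma$ the contribution is a polynomial expression in the $R$-matrix and translation entries, the $\psi$-integrals over the vertex moduli, and the weight-sums over the half-edge multiplicities; the latter are controlled by Lemma~\ref{polynomiality}, which provides polynomiality in $r$ together with divisibility by $r^{h_1(\Gamma)}$ to offset the $r^{-h_1(\Gamma)}$ normalization. Since the only discrepancy between the two CohFTs lies in $R$-matrix and translation data divisible by $r$, the difference of the two contributions is a polynomial in $r$ with vanishing constant term, and summing over $\Gamma$ gives $\Zvrgg{g}{A'}|_{r=0} = C^{r\gg0}_{g,A'}|_{r=0}$. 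I expect the principal difficulty to be the bookkeeping of powers of $r$: as already in Lemma~\ref{claim1}, the map $\phi$ drops its degree on deeper boundary strata because of ghost automorphisms, so one must confirm that an $O(r)$ discrepancy in the $R$-matrix cannot be promoted to an $O(1)$ contribution after the weight-sums and the $r^{-h_1(\Gamma)}$ factor---precisely the combinatorics that the graph-sum reformulation and Lemma~\ref{polynomiality} are designed to track.
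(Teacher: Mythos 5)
Your proposal is correct and follows essentially the same route as the paper: both classes are encoded as semisimple CohFTs on $V = \C\{\zeta_0,\ldots,\zeta_{r-1}\}$ sharing a common TFT, their diagonal $R$-matrices are computed from Chiodo's formula, one observes that the $R$-matrices agree modulo $r$ (since the $d\geq 2$ Chern-character terms contribute only in positive $r$-degree), and Lemma~\ref{polynomiality} converts this into agreement of the constant terms of the graph sums. Your closing concern about ghost automorphisms and the $r^{-h_1(\Gamma)}$ bookkeeping is precisely what the paper's CohFT reformulation is designed to absorb, and it is handled exactly as you describe.
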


A proof of this lemma will imply Theorem~\ref{main} for the tuples $A$ under consideration, since $C^{r}_{g,A'}$ clearly vanishes past the rank of the bundle $-R\pi^*\mathcal{L}_{A'}$ and a straightforward Riemann--Roch computation shows that
\[\text{rank}(-R\pi_*\mathcal{L}_{A'}) = g-1 + \frac{1}{r}\sum_{i=1}^n a_i' = g.\]

The proof of Lemma~\ref{claim2} follows the same lines as that of Lemma~\ref{claim1}.  However, to make the argument carefully, one must be vigilant about the boundary terms appearing in both classes.  The most streamlined way to handle these is to realize that both $\Zvr{g}{A'}$ and $C^{r}_{g,A'}$ can be encoded as semisimple CohFTs, and hence can be expressed as the result of an $R$-matrix action on a Topological Field Theory (TFT).  The two classes are then compared by explicitly computing both the $R$-matrix and the TFT in each case.  This is the content of the following section.

\section{The CohFTs and their $R$-matrices}
\label{Rmatrices}

The results of this section are well-known to experts--- in particular, closely-related computations appear in \cite{CZ}, \cite{SSZ}, and \cite{CR}--- but we recall them here for clarity.

\subsection{The CohFTs}
\label{Raction}

Recall that a CohFT, as originally defined by Kontsevich and Manin \cite{KM}, consists of a finite-dimensional $\C$-vector space $V$ equipped with a nondegenerate pairing $\eta$, a distinguished element $\mathbf{1} \in V$, and a system of homomorphisms
\[\Omega_{g,n}: V^{\otimes n } \rightarrow H^*(\M_{g,n})\]
satisfying a number of compatibility axioms.  Any CohFT yields a quantum product $\ast$ on $V$, defined by
\[\eta(v_1 \ast v_2, v_3) = \Omega_{0,3}(v_1 \otimes v_2 \otimes v_3),\]
and we say that the CohFT is semisimple if $\ast$ makes $V$ into a semisimple $\C$-algebra--- that is, if there exists a basis $\epsilon_1, \ldots, \epsilon_r$ for $V$ for which
\[\epsilon_i \ast \epsilon_j = \delta_{ij} \epsilon_i.\]
The work of Givental and Teleman \cite{Givental, Teleman} implies that a semisimple CohFT can be expressed as
\[\Omega = R \cdot \omega,\]
where
\[R = R(z) \in \text{End}(V)[\![z]\!]\]
is an $R$-matrix and $\omega$ is the Topological Field Theory obtained by projecting $\Omega$ to $H^0(\M_{g,n})$.

For the reader's convenience, we briefly recall the definition of the action of an $R$-matrix on a CohFT; more detailed information can be found in \cite{PPZ}.  We have:
\[R \cdot \omega := \sum_{\Gamma \in G_{g,n}} \frac{1}{|\Aut(\Gamma)|} \text{Cont}_{\Gamma},\]
where $G_{g,n}$ is the set of decorated dual graphs of curves in $\M_{g,n}$, and $\text{Cont}_{\Gamma} \in H^*(\M_{g,n}) \otimes (V^*)^{\otimes n}$ is defined via contraction of tensors as follows:
\begin{itemize}
\item at each vertex of $\Gamma$, place the tensor
\[(T\omega)_{g(v), \text{val}(v)} \in H^*(\M_{g(v), \text{val}(v)}) \otimes (V^*)^{\otimes \text{val}(v)}\]
described below;
\item at each leg $l$ of $\Gamma$ attached to a vertex $v$, place
\[R^{-1}(\psi_l) \in H^*(\M_{g(v),\text{val}(v)}) \otimes \text{End}(V);\]
\item at each edge $e=(h,h')$ of $\Gamma$ joining vertices $v$ and $v'$, place
\[\frac{\eta^{-1} - R^{-1}(\psi_h) \eta^{-1} R^{-1}(\psi_h')^t}{\psi_h + \psi_{h'}} \in H^*(\M_{g(v), \text{val}(v)}) \otimes H^*(\M_{g(v'), \text{val}(v')}) \otimes V^{\otimes 2}.\]
\end{itemize}
In the vertex contribution, the translation operator $T$ is defined by
\[T(z) := z \one - z R^{-1}(z) \one \in z^2 V[[z]],\]
and $(T\omega)_{g,n}(v_1 \otimes \cdots \otimes v_n)$ is
\[\sum_{m \geq 0} \frac{1}{m!} p_{m*}(v_1 \otimes \cdots \otimes v_n \otimes T(\psi_{n+1}) \otimes \cdots \otimes T(\psi_{n+m})),\]
where $p_m: \M_{g,n+m} \rightarrow \M_{g,n}$ is the forgetful map.  

In our case, the underlying vector space is
\[V = \C\{\zeta_0, \zeta_1, \ldots, \zeta_{r-1}\}\]
with the pairing
\[\eta(\zeta_i, \zeta_j) = \begin{cases} 1 & \text{ if } i+j\equiv 0 \mod r\\ 0 & \text{ otherwise}.\end{cases}\]
We define two CohFTs on this vector space.

The first CohFT is
\[\OmegaZvr_{g,n}(\zeta_{a_1} \otimes \cdots \otimes \zeta_{a_n}) = r^{g} \cdot \Zvr{g}{A} = \frac{1}{r^{g-1}}\phi_*\left(e^{r^2c_1(-R\pi_*\mathcal{L}_A)}\right),\]
where $A = (a_1, \ldots, a_n)$.  The second is
\[\OmegaCZr_{g,n}(\zeta_{a_1} \otimes \cdots \otimes \zeta_{a_n}) = r^{g} \cdot C^{r}_{g,A} = \frac{1}{r^{g-1}}\phi_*\big(c_{(r^2)}(-R\pi_*\mathcal{L}_{A})\big).\]
In both cases, the class is set to zero when the moduli space $\M^{0/r}_{g,A}$ does not exist--- that is, whenever the condition
\[\sum_{i=1}^n a_i \equiv 0 \mod r\]
is not satisfied.

\begin{remark}
\label{GWtheory}
We remark that $\OmegaCZr_{g,n}$ has another interpretation, as discussed in \cite{Clader}.  Namely, we consider the orbifold $\CZr$, on which $\C^*$ acts by multiplication.  Then, if $\lambda$ denotes the equivariant parameter, one has
\[ \phi_*\bigg([\M_{g,\mathbf{a}}(\CZr, 0)]^{\vir}_{\C^*}\bigg) = \sum_{i=0}^{\infty} \left( \frac{\lambda}{r} \right)^{g-1 + \frac{1}{r}\sum a_i} \phi_*(c_i(-R\pi_* \mathcal{L}_A)),\]
where $\M_{g,\mathbf{a}}(\CZr, 0)$ denotes the substack of the moduli space of stable maps to $\CZr$ where the monodromy at the $i$th marked point is given by $a_i$.  This follows, for example, from the localization computations in Appendix \ref{localization} for $\P[r,1]$, in the case where the degree $d$ is zero. 

It follows that
\begin{equation}
\label{CZr}
\OmegaCZr_{g,n}(\zeta_{a_1} \otimes \cdots \otimes \zeta_{a_n}) = r^{g-1+\frac{2}{r}\sum a_i} \phi_*\bigg([\M_{g,\mathbf{a}}(\CZr, 0)]^{\vir}_{\C^*}\bigg)\bigg|_{\lambda =\frac{1}{r}}.
\end{equation}
Note that one must be careful in the situation where $a_1 = \cdots = a_n = 0$, since in this case, $\M_{g, \mathbf{a}}(\CZr, 0)$ is noncompact, and the virtual cycle should be understood as defined via the localization formula.
\end{remark} 

\begin{lemma}
  \label{CohFTs}
  Both $\OmegaZvr_{g,n}$ and $\OmegaCZr_{g,n}$ form semisimple
  Cohomological Field Theories with unit $\zeta_0$.
\begin{proof}
  For $\OmegaCZr_{g,n}$, the CohFT property follows from the
  interpretation (\ref{CZr}).
  Indeed, the equivariant Gromov--Witten theory of $\CZr$ forms a
  CohFT under the pairing
  \begin{equation*}
    \eta_{\CZr}(\zeta_i, \zeta_j) = \begin{cases} \frac{1}{\lambda} & \text{ if } i=j=0\\ \frac{1}{r} & \text{ if } 0 \neq i+j\equiv 0 \mod r\\ 0 & \text{ otherwise},\end{cases}
  \end{equation*}
  and the pre-factor $r^{g-1+\frac{2}{r}\sum a_i}$ can easily be shown
  to respect the decomposition properties.
  In general, the proof that both of $\OmegaZvr_{g,n}$ and
  $\OmegaCZr_{g,n}$ form CohFTs follows from the fact that both are
  twisted theories (over $B\Z_r$), which are studied in \cite{Tseng}.
  The CohFT axioms are consequences of the pullback and splitting
  properties satisfied by the K-theory class $R\pi^*\mathcal{L}_A$
  (compare to \cite[Lemma~B.0.9]{Tseng}).

The quantum product in either case can be computed explicitly, since the only contribution to the genus-zero three-point invariants comes in cohomological degree zero.  Thus,
\[\OmegaZvr_{0,3}(\zeta_{a_1} \otimes \zeta_{a_2} \otimes \zeta_{a_3}) = \OmegaCZr_{0,3}(\zeta_{a_1} \otimes \zeta_{a_2} \otimes \zeta_{a_3}) = \begin{cases} 1 & \text{ if } \sum a_i \equiv 0 \mod r,\\ 0 & \text{ otherwise}.\end{cases}\]
It follows that the quantum products are both
\[\zeta_i \ast \zeta_j = \zeta_{i+j \mod r}.\]
This shows that the unit is $\zeta_0$, and moreover, that the ring structure on $V$ is
\[\frac{\C[\zeta_1]}{(\zeta_1^r =1)}.\]
It is easy to see that this ring is semisimple, with idempotents given by
\[\epsilon_i := \frac 1r \sum_{j = 0}^{r - 1} \xi^{ij} \zeta_1^j\]
for $i \in \{0, \ldots, r-1\}$, where $\xi$ is a primitive $r$th root of unity.
\end{proof}
\end{lemma}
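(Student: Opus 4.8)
The plan is to establish the three defining features in turn: that each assignment satisfies the CohFT axioms, that the induced quantum product is the group-algebra product on $\Z/r\Z$, and that this product is semisimple. Semisimplicity will be immediate once the product is identified, so the substance of the argument lies in the axioms and in the genus-zero three-point computation.

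First I would verify the CohFT axioms. The most uniform route treats both $\OmegaZvr_{g,n}$ and $\OmegaCZr_{g,n}$ as twisted theories over $B\Z_r$ in the sense of \cite{Tseng}: each is obtained by pushing forward a universal characteristic class of the K-theory class $-R\pi_*\mathcal{L}_A$ along the forgetful map $\phi\colon \M^{0/r}_{g,A} \to \M_{g,n}$. The symmetry, unit, and gluing axioms then reduce to the corresponding pullback and splitting properties of $R\pi_*\mathcal{L}_A$ under restriction to the boundary of $\M^{0/r}_{g,A}$. Concretely, over a boundary stratum the universal line bundle restricts to the universal line bundle on the normalized pieces, and its direct image splits accordingly up to the edge contributions recorded by the node multiplicities $q$; this is the content of the splitting property in \cite[Lemma~B.0.9]{Tseng}. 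For $\OmegaCZr_{g,n}$ one may argue more directly using the interpretation \eqref{CZr} as the equivariant Gromov--Witten theory of $\CZr$, which forms a CohFT under the orbifold pairing; here I would check that the prefactor $r^{g-1+\frac{2}{r}\sum a_i}$ is compatible with gluing, which follows since both $g$ and $\sum a_i$ add under degeneration exactly as the pairing normalization requires.

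Next I would compute the genus-zero three-point values. Because $\M_{0,3}$ is a point, only the cohomological degree-zero part of each class contributes, and in both constructions this degree-zero part equals $1$ precisely when the root space $\M^{0/r}_{0,A}$ is nonempty—that is, when $\sum a_i \equiv 0 \bmod r$—and vanishes otherwise (the stacky degree $r^{2g-1}$ of $\phi$ cancels against the normalizing factor $r^{g-1}$). Through the defining relation $\eta(v_1 \ast v_2, v_3) = \Omega_{0,3}(v_1 \otimes v_2 \otimes v_3)$, this forces
\[
\zeta_i \ast \zeta_j = \zeta_{i+j \bmod r},
\]
so that the unit is $\zeta_0$ and $(V,\ast) \cong \C[\zeta_1]/(\zeta_1^r - 1)$. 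Semisimplicity then follows by diagonalizing this group algebra: with $\xi$ a primitive $r$th root of unity, the elements
\[
\epsilon_i := \frac{1}{r} \sum_{j=0}^{r-1} \xi^{ij}\zeta_1^j, \qquad i \in \{0, \dots, r-1\},
\]
form a complete set of orthogonal idempotents.

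The main obstacle is the gluing axiom along \emph{nonseparating} nodes, which is exactly where the naive Hain-type formula breaks down and where the $r$th-root structure is essential. Verifying it requires controlling how $-R\pi_*\mathcal{L}_A$ and the degree of $\phi$ behave under the nonseparating gluing map, including the bookkeeping of the node multiplicity $q$ and the accompanying factor of $r$ per loop. I expect to reduce this to the splitting property of \cite{Tseng}, the only genuine subtlety being to confirm that the normalizing constants in the definitions of $\OmegaZvr_{g,n}$ and $\OmegaCZr_{g,n}$ are precisely those dictated by the pairing $\eta$.
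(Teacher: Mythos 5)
Your proposal is correct and follows essentially the same route as the paper: verifying the CohFT axioms via the twisted-theory framework of \cite{Tseng} (and the equivariant Gromov--Witten interpretation of $\CZr$ for $\OmegaCZr_{g,n}$), computing the genus-zero three-point values to identify the quantum product as the group algebra $\C[\zeta_1]/(\zeta_1^r=1)$, and exhibiting the discrete-Fourier idempotents. The nonseparating-node gluing you flag as the main obstacle is handled in the paper exactly as you anticipate, by the splitting property of $R\pi_*\mathcal{L}_A$ from \cite[Lemma~B.0.9]{Tseng}.
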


It follows from Lemma~\ref{CohFTs} that both $\OmegaZvr_{g,n}$ and $\OmegaCZr_{g,n}$ can be computed in terms of an $R$-matrix action on a TFT.  The TFTs are easy to calculate, since they arise from projecting the CohFT to cohomological degree zero; the result, in either case, is
\[\omega_{g,n}(\zeta_{a_1} \otimes \cdots \otimes \zeta_{a_n}) = \begin{cases} r^g & \text{ if } \sum_{i=1}^n a_i \equiv 0 \mod r,\\ 0 & \text{ otherwise}.\end{cases}\]

\subsection{Computation of $R$-matrices}

Fix the basis $\{ \zeta_0, \ldots, \zeta_{r-1}\}$ for $V$.  We claim that, in this basis, the $R$-matrix associated to the CohFT $\OmegaZvr_{g,n}$ is equal to
\begin{equation}
  \RZvr(z) = \exp
  \begin{pmatrix}
    -\frac{r^2 B_2(0)}2 z && \\ & \ddots & \\ && -\frac{r^2 B_2(\frac{r-1}r)}2 z
  \end{pmatrix} ,
\end{equation}
and that the $R$-matrix associated to the CohFT $\OmegaCZr_{g,n}$ is
\begin{equation}
\label{Rclaim}
\RCZr(z) = \exp
\begin{pmatrix}
  \sum_{d=1}^\infty \frac{B_{d+1}(0)}{d(d+1)} (-r^2z)^d && \\
  & \ddots & \\
  && \sum_{d=1}^\infty \frac{B_{d+1}(\frac{r-1}r)}{d(d+1)} (-r^2z)^d
\end{pmatrix} ,
\end{equation}
where in both cases the matrix inside the exponential is diagonal.\footnote{The fact that these matrices satisfy the symplectic condition $R(z) \cdot R^*(-z) = 1$, where $*$ denotes the adjoint with respect to the pairing, is a straightforward consequence of the identity $B_n(1-x) = (-1)^n B_n(x)$.}

The argument is essentially the same in either case, so we focus on the slightly more complicated situation for $\OmegaCZr_{g,n}$.  By Lemma 2.2 of \cite{MOPPZ}, it suffices to verify that $(\RCZr \cdot \omega)_{g,n}$ agrees with $\OmegaCZr_{g,n}$ when restricted to the open locus $\mathcal{M}_{g,n} \subset \M_{g,n}$.  The only graph contributing to the $R$-matrix action on the open locus is a single vertex with $n$ legs, for which the contribution is
\begin{align}
\label{Tomega}
&(T\omega)_{g,n}((\RCZr)^{-1}(\psi_1) \zeta_{a_1} \otimes \cdots \otimes (\RCZr)^{-1}(\psi_n)\zeta_{a_n})\\
\nonumber=&\sum_{m \geq 0} \frac{1}{m!} p_{m*} \bigg(\omega_{g,n}\big((\RCZr)^{-1}(\psi_1) \zeta_{a_1} \otimes \cdots \otimes (\RCZr)^{-1}(\psi_n)\zeta_{a_n}\\
\nonumber&\hspace{4cm}\otimes T(\psi_{n+1}) \otimes \cdots \otimes T(\psi_{n+m})\big)\bigg).
\end{align}
Here,
\begin{align*}
T(z) &= z \mathbf{1} - z (\RCZr)^{-1}(z) \mathbf{1}\\
&=z\left( 1 - \exp\left(-\sum_{d=1}^{\infty}\frac{ B_{d+1}(0)}{d(d+1)}(-r^2z)^d\right)\right)\zeta_0.
\end{align*}

Using the definition of $\omega_{g,n}$ and applying Lemma 2.3 of \cite{PixtonThesis} to the power series
\[X(t) = 1 - \exp\left(-\sum_{d=1}^{\infty} \frac{ B_{d+1}(0)}{d(d+1)} (-r^2t)^d\right),\]
we can re-write (\ref{Tomega}) as
\[r^{g}\exp\left( \sum_{d=1}^{\infty}(-1)^d\left( \frac{r^{2d} B_{d+1}(0)}{d(d+1)} \kappa_d - \sum_{j=1}^n \frac{r^{2d} B_{d+1}(\frac{a_j}{r})}{d(d+1)} \psi_j^d\right)\right).\]
The classes $\kappa_d$ and $\psi_j$ are pulled back under the degree-$r^{2g-1}$ map $\phi: \mathcal{M}^{0/r}_{g,A} \rightarrow \mathcal{M}_{g,n}$.  Thus, the above is equal to 
\[\frac{1}{r^{g-1}}\phi_*\exp\left( \sum_{d=1}^{\infty}(-1)^d\left( \frac{r^{2d} B_{d+1}(0)}{d(d+1)} \kappa_d - \sum_{j=1}^n \frac{r^{2d} B_{d+1}(\frac{a_j}{r})}{d(d+1)} \psi_j^d\right)\right),\]
where we use the same notation for the $\kappa$ and $\psi$ classes on $\mathcal{M}_{g,n}$ as for their pullbacks to $\mathcal{M}^{0/r}_{g,A}$.  Now, by Chiodo's formula, the above coincides precisely with the restriction of $\OmegaCZr_{g,n}(\zeta_{a_1} \otimes \cdots \otimes \zeta_{a_n})$ to $\mathcal{M}_{g,n}$.

\subsection{Proof of Theorem~\ref{main}}

We can now conclude the proof of the main theorem.

\begin{proof}[Proof of Lemma~\ref{claim2} and Theorem~\ref{main}]
When $A$ has exactly one negative entry, we have reduced the claim to proving Lemma~\ref{claim2}, or in other words that
\[\left. \frac{1}{r^g} (\RZvr \cdot \omega)_{g,n}(\zeta_{a_1'} \otimes \cdots \otimes \zeta_{a_n'})\right|_{r=0} \hspace{-0.4cm}= \left. \frac{1}{r^g} (\RCZr \cdot \omega)_{g,n}(\zeta_{a_1'} \otimes \cdots \otimes \zeta_{a_n'})\right|_{r=0}.\]
This follows from Lemma~\ref{polynomiality}, using the fact that the
lowest-order terms in $r$ of the two $R$-matrices agree.

Thus, the theorem is proved in the case where exactly one $a_i$ is negative.  Since, $\Omega_{g,A}$ is polynomial in $A$ by Lemma \ref{polynomiality}, this implies the result in general as long as $n > 0$.

If $n=0$, the initial step of replacing $A$ by $A'$ is no longer valid, but the above nevertheless implies that
\[\Omega_{g,\emptyset} = \left. \frac{1}{r^{2g-1}} \phi_*\big(c_{(r^2)}(-R\pi_*\mathcal{L}_{\emptyset})\big) \right|_{r=0}.\]
In this case, $R^0\pi_*\mathcal{L}_{\emptyset}$ is a trivial line bundle, while $R^1\pi_*\mathcal{L}_{\emptyset}$ is the pullback under $\phi$ of the Hodge bundle $\mathbb{E}$ on $\M_g$.  Thus, the vanishing of $\Omega_{g,\emptyset}$ in degrees past $g$ follows from the fact that $\mathbb{E}$ is a rank-$g$ vector bundle.
\end{proof}

The computation of $R$-matrices also reveals why the geometric reformulation of $\Omega_{g,A}$ as the constant term of $\Zvr{g}{A}$ matches Pixton's original presentation as the constant term of $\Conjr{g}{A}$.

\begin{lemma}
  \label{PixtonZvonkine}
The two definitions of $\Conj{g}{A}$ described in Sections \ref{conjecture} and \ref{geom} agree:
\[\Conjr{g}{A}\bigg|_{r=0} = \Zvr{g}{A}\bigg|_{r=0}.\]

\begin{proof}
First, we note that both sides are unaffected if $A$ is replaced by its reduction $A''$ modulo $r$; we have already seen this for $\Zvr{g}{A}$, while for $\Conjr{g}{A}$, it follows easily from the definition.

After this replacement, $r^{-g} \Zvr{g}{A''}$ is a semisimple CohFT, and the formula for it via the $R$-matrix action
  exactly agrees with the formula for $\Conjr{g}{A''}$, except that the modifications
  \begin{align*}
    e^{-\frac 12 r^2 B_2(0) \kappa_1}
    &\rightsquigarrow 1, \\
    e^{\frac 12 r^2 B_2\left(\frac{a_i''}r\right)\psi_i}
    &\rightsquigarrow e^{\frac 12 (a_i'')^2\psi_i}, \\
    \frac{1 - e^{-\frac 12 r^2 \left(B_2\left(\frac{w(h)}r\right)\psi_h + B_2\left(\frac{w(h')}r\right)\psi_{h'}\right)}}{\psi_h + \psi_{h'}}
    &\rightsquigarrow \frac{1 - e^{-\frac 12 w(h)w(h')(\psi_h + \psi_{h'})}}{\psi_h + \psi_{h'}},
  \end{align*}
  need to be done for the vertex, leg and edge factors, respectively.
  Now note that
  \begin{equation*}
    B_2(x) = x^2 - x + \frac 16.
  \end{equation*}
  Hence, the first two modifications amount to a multiplication by
  \begin{equation*}
    e^{\frac{r^2}6 \kappa_1} \prod_{i = 1}^n e^{\frac 12\left(ra_i'' - \frac 16 r^2\right)\psi_i},
  \end{equation*}
  which leaves constant terms in $r$ invariant. The third modification
  also does not affect constant-in-$r$-terms, since
  \begin{multline*}
    r^2 B_2\left(\frac{w(h)}r\right) = r^2 B_2\left(\frac{w(h')}r\right) = (w(h))^2 + rw(h) + \frac{r^2}6 \\
    \equiv (w(h))^2 \equiv -w(h)w(h') \pmod{r}.
  \end{multline*}
\end{proof}
\end{lemma}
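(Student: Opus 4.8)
The plan is to prove the equality of constant terms by writing the geometric class through the $R$-matrix action and matching it, insertion by insertion, against Pixton's graph sum. First I would reduce to the case in which $A$ is replaced by its reduction $A''$ modulo $r$, with $0 \le a_i'' < r$. For $\Zvr{g}{A}$ this is the argument already used in Lemma~\ref{claim1}: twisting the universal line bundle by the divisors $[x_i]$ identifies $\M^{0/r}_{g,A}$ with $\M^{0/r}_{g,A''}$, and through Chiodo's formula the resulting modification of $R\pi_*\mathcal{L}_A$ affects only terms of positive order in $r$. For $\Conjr{g}{A}$ the reduction is immediate from the definition, since $A$ enters only through condition {\bf (R1)}, which depends on $A$ modulo $r$, and through the leg factors $e^{\frac 12 a_i^2 \psi_i}$; as $(a_i'')^2 - a_i^2$ is divisible by $r$, replacing $a_i$ by $a_i''$ multiplies the leg factor by $e^{\frac 12((a_i'')^2 - a_i^2)\psi_i} = 1 + O(r)$, which does not change the value at $r = 0$.

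After this reduction, $\OmegaZvr_{g,n} = r^{g}\,\Zvr{g}{A''}$ is the semisimple CohFT of Lemma~\ref{CohFTs}, so Givental--Teleman reconstruction together with the diagonal $R$-matrix $\RZvr$ established above expresses it as a sum over decorated dual graphs; dividing by $r^g$ gives the corresponding expression for $\Zvr{g}{A''}$. Because $\RZvr$ is diagonal, the pairing $\eta$ carries no factors of $r$, and the underlying TFT $\omega_{g,n}$ is supported on weight-balanced configurations, this graph sum has precisely the combinatorial shape of Pixton's formula: the same dual graphs $\Gamma$, the same weight functions $w$ obeying {\bf (R1)--(R3)}, and (after the division by $r^g$) the same prefactor $|\Aut(\Gamma)|^{-1}r^{-h_1(\Gamma)}$ and gluing map $\iota_{\Gamma*}$. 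The only difference lies in the vertex, leg, and edge insertions, which in the geometric description involve the Bernoulli polynomial $B_2$.

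The comparison of insertions is then made using $B_2(x) = x^2 - x + \frac 16$. The vertex and leg insertions $e^{-\frac12 r^2 B_2(0)\kappa_1}$ and $e^{\frac12 r^2 B_2(a_i''/r)\psi_i}$ differ from Pixton's $1$ and $e^{\frac12(a_i'')^2\psi_i}$ by weight-independent multiplicative factors whose exponents, $\frac12 r^2 B_2(0)\kappa_1$ and $\frac12\left(\frac{r^2}{6} - r a_i''\right)\psi_i$, are divisible by $r$, so these factors are of the form $1 + O(r)$. The edge insertions are the subtle case: the exponents $r^2 B_2(w(h)/r)$ and $w(h)w(h')$ are unequal, but condition {\bf (R2)} gives $w(h') \equiv -w(h) \pmod r$, whence $r^2 B_2(w(h)/r) \equiv (w(h))^2 \equiv -w(h)w(h') \pmod r$; thus the two edge insertions, expanded as power series in $\psi_h + \psi_{h'}$, have coefficients agreeing modulo $r$.

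The step I expect to require the most care is converting these comparisons into an equality of constant terms, and the tool for this is Lemma~\ref{polynomiality}. Pulling the weight-independent vertex and leg factors out of the sum over $w$, they multiply an (eventually polynomial in $r$) class by $1 + O(r)$ and hence leave its value at $r = 0$ unchanged. For the edge factors, the difference of integrands for a fixed graph $\Gamma$ takes the form $r \cdot Q$, where $Q$ is polynomial in the half-edge weights $w(h_1), \dotsc, w(h_N)$ with tautological coefficients; by the divisibility statement of Lemma~\ref{polynomiality}, $\sum_w Q$ is divisible by $r^{h_1(\Gamma)}$, so after the prefactor $r^{-h_1(\Gamma)}$ and the pushforward $\iota_{\Gamma*}$ this difference is $r$ times a polynomial in $r$ and vanishes at $r = 0$. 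Summing over $\Gamma$ gives $\Zvr{g}{A''}|_{r=0} = \Conjr{g}{A''}|_{r=0}$, which with the reduction of the first paragraph proves the lemma. The only remaining bookkeeping is to track the signs in the edge insertion so that the congruence emerges with the correct orientation, and to confirm that the leftover vertex and leg factors are of strictly positive order in $r$ even though $a_i''$ itself varies with $r$.
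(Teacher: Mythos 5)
Your proposal is correct and follows essentially the same route as the paper: reduce to the mod-$r$ representative $A''$, express $r^{-g}\Zvr{g}{A''}$ via the Givental--Teleman graph sum with the diagonal $R$-matrix, and match the vertex, leg, and edge insertions against Pixton's using $B_2(x) = x^2 - x + \frac{1}{6}$ together with the congruence $r^2 B_2(w(h)/r) \equiv -w(h)w(h') \pmod r$. The only difference is that you spell out explicitly how Lemma~\ref{polynomiality} (in particular the divisibility of $\sum_w Q$ by $r^{h_1(\Gamma)}$) converts the modulo-$r$ agreement of insertions into equality of constant terms, a step the paper's proof leaves implicit.
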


\subsection{Relations with powers of the log canonical}

Fix an integer $k$ and a tuple of integers $A = (a_1, \ldots, a_n)$ for which
\[\sum_{i=1}^n a_i \equiv k (2g-2+n) \mod r.\]
As above, let $\M_{g,A}^{k/r}$ denote the moduli space of pointed stable curves with a line bundle $L$ satisfying \eqref{Lk}.

Chiodo's formula extends to these more general moduli spaces with only a small modification.  It reads:
\begin{equation}
\label{Chiodok}
\ch_d(R\pi_{*}\mathcal{L}_{A,k}) = \frac{B_{d+1}(\frac kr)}{(d+1)!}\kappa_d - \sum_{i=1}^n \frac{B_{d+1}(\frac{a_i}{r})}{(d+1)!}\psi_i^d \;+ \hspace{2.5cm}
\end{equation}
\[\hspace{0.75cm}\frac{r}{2} \sum_{\substack{0 \leq l \leq g\\ I \subset [n]}} \frac{B_{d+1}\left(\frac{q_{l,I}}{r}\right)}{(d+1)!} p^*i_{(l,I)*}(\gamma_{d-1}) + \frac{r}{2} \sum_{q=0}^{r-1} \frac{B_{d+1}(\frac{q}{r})}{(d+1)!}j_{(irr,q)*}(\gamma_{d-1}),\]
and the multiplicities $q_{l,I}$ are now determined by the condition
\[q_{l,I} + \sum_{i \in I} a_i \equiv k(2g-2+n) \mod r.\]
Using this, the proof of Theorem~\ref{main} is readily generalized.

\begin{theorem}
\label{thm2}
For any $k$ and any tuple $A$ of integers satisfying $\sum a_i  = k(2g-2+n)$, the component of $\Omega_{g,A,k}$ in degree $d$ vanishes for all $d > g$.
\begin{proof}
  Recall from Lemma~\ref{kpolynomiality} that the class
  $\Omega_{g,A,k}$ is polynomial in $k$. Therefore, it suffices to
  prove the theorem only for $k<0$.  In this case, the argument in the
  proof of Theorem~\ref{main} extends straightforwardly.
  
  Specifically, Lemma~\ref{PixtonZvonkine} again shows that the two definitions of $\Omega_{g,A,k}$ agree, so it suffices to prove the vanishing for the geometrically formulated class.  When exactly
  one $a_i$ is negative, one can also replace $A$ by $A' = (a_1 + r,
  a_2, \ldots, a_n)$, which again makes $-R\pi_*(\mathcal{L}_{A', k})$ a vector bundle of rank $g$ (using that $k<0$ to ensure that Lemma~\ref{vbundle} still holds) but does not affect the lowest-order term
  in $r$ of $\phi_*(e^{r^2 c_1(-R\pi_*\mathcal{L}_{A,k})})$.  From
  here, one proves again that the constant-in-$r$ term of
\begin{equation}
\label{CohFT1}
\frac{1}{r^{2g-1}} \phi_*(e^{r^2 c_1(-R\pi_*\mathcal{L}_{A',k})})
\end{equation}
agrees with that of
\begin{equation}
\label{CohFT2}
\frac{1}{r^{2g-1}} \phi_*(c_{(r^2)}(-R\pi_*\mathcal{L}_{A',k})),
\end{equation}
assuming $r$ is first chosen sufficiently large.  The proof is the same as previously; indeed, after multiplying by $r^g$, both (\ref{CohFT1}) and (\ref{CohFT2}) form CohFTs on the same vector space $V$ with the same pairing $\eta$ as considered previously.  The TFT, on the other hand, is now nonzero only when $\sum_{i=1}^n a_i \equiv k(2g-2+n) \; \mod r$, and the unit is not $\zeta_0$ but $\zeta_k$.  This shifted unit, which appears in the definition of the translation operator $T$, precisely accounts for the modification to Chiodo's formula.  A comparison of the $R$-matrices again completes the proof.
\end{proof}
\end{theorem}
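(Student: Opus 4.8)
The plan is to run the proof of Theorem~\ref{main} essentially verbatim, while keeping careful track of the three features introduced by the $\omega_{\log}^{\otimes k}$-twist: the modified $\kappa$-coefficient $B_{d+1}(k/r)$ in Chiodo's formula \eqref{Chiodok}, the new support condition on the Topological Field Theory, and the shifted unit $\zeta_k$.

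First I would reduce to the case $k<0$. By Lemma~\ref{kpolynomiality} the class $\Omega_{g,A,k}$ is polynomial in $k$, so it is enough to prove the degree-$d$ vanishing for all $d>g$ on the infinite set of negative integers $k$; polynomiality then propagates the conclusion to every $k$. Next, the analogue of Lemma~\ref{PixtonZvonkine} identifies $\Omega_{g,A,k}$ with the constant-in-$r$ term of the geometrically defined class $\widetilde{\Omega}^r_{g,A,k}$, so it suffices to establish the vanishing for the latter. Finally, Lemma~\ref{kpolynomiality} also provides polynomiality of $\Omega_{g,A,k}$ in the arguments $A$, which lets me reduce to tuples $A$ having exactly one negative entry, say $a_1<0$. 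For such $A$ I perform the replacement $A' = (a_1+r, a_2, \ldots, a_n)$, and the argument of Lemma~\ref{claim1} shows that this alters only the higher-order-in-$r$ terms of $\frac{1}{r^{2g-1}}\phi_*(e^{r^2 c_1(-R\pi_*\mathcal{L}_{A,k})})$, hence leaves the constant term untouched.

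With these reductions in hand I would verify that $-R\pi_*\mathcal{L}_{A',k}$ is a vector bundle of rank exactly $g$. For the vanishing $R^0\pi_*\mathcal{L}_{A',k}=0$, I would rerun Lemma~\ref{vbundle}: on any stable component $C'$ the twisted orbifold degree is $\frac{1}{r}\big(k(2g(C')-2+\text{val}(C')) - \sum_{i\in C'} a_i'\big)$, and since $k<0$ makes the $\omega_{\log}$ contribution strictly negative while each $a_i'\geq 0$, this degree is negative; hence $L$ restricts to a negative-degree bundle on every component and so has no nonzero global section, giving $R^0\pi_*\mathcal{L}_{A',k}=0$. The Riemann--Roch computation of Section~\ref{proof} then goes through unchanged --- using $\sum a_i' = k(2g-2+n)+r$ --- to give $\text{rank}(-R\pi_*\mathcal{L}_{A',k}) = g$. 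In particular the weighted total Chern class $c_{(r^2)}(-R\pi_*\mathcal{L}_{A',k})$ vanishes in cohomological degree greater than $g$.

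The crux, and the step I expect to require the most care, is the comparison of $\frac{1}{r^{2g-1}}\phi_*(e^{r^2 c_1(-R\pi_*\mathcal{L}_{A',k})})$ with $\frac{1}{r^{2g-1}}\phi_*(c_{(r^2)}(-R\pi_*\mathcal{L}_{A',k}))$. After rescaling by $r^g$, both are semisimple CohFTs on $V = \C\{\zeta_0, \ldots, \zeta_{r-1}\}$ with the same pairing $\eta$, by the twisted-theory argument of Lemma~\ref{CohFTs}; the only changes from the $k=0$ situation are that the underlying TFT $\omega_{g,n}$ is now supported on tuples with $\sum a_i \equiv k(2g-2+n) \bmod r$ and that its unit is $\zeta_k$ rather than $\zeta_0$. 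The shifted unit is exactly the delicate point: because $\zeta_k$ feeds into the translation operator $T(z) = z\one - zR^{-1}(z)\one$, the extra marked points produced by $p_{m*}$ now contribute $\kappa$-classes weighted by the $\zeta_k$-diagonal entry of the $R$-matrix, which reproduces precisely the coefficient $B_{d+1}(k/r)$ of \eqref{Chiodok}. The leg and edge contributions are unaffected, so the $R$-matrices $\RZvr(z)$ and $\RCZr(z)$ are literally those of Section~\ref{Rmatrices}; their lowest-order-in-$r$ terms therefore still agree, and Lemma~\ref{kpolynomiality} forces the two constant-in-$r$ terms to coincide. Since the total-Chern-class side vanishes beyond degree $g$, so does $\Omega_{g,A,k}$, which completes the proof.
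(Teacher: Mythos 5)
Your proposal is correct and follows essentially the same route as the paper's proof: reduction to $k<0$ by polynomiality in $k$, reduction to tuples with one negative entry by polynomiality in $A$, the shift $A \mapsto A'$ making $-R\pi_*\mathcal{L}_{A',k}$ a rank-$g$ vector bundle (with $k<0$ forcing strictly negative degree on every component), and the CohFT comparison in which the shifted unit $\zeta_k$ in the translation operator accounts for the $B_{d+1}(k/r)$ kappa term in Chiodo's formula \eqref{Chiodok}. Your elaborations --- the explicit degree computation giving $R^0\pi_*\mathcal{L}_{A',k}=0$ and the explanation of how $\zeta_k$ produces the modified $\kappa$-coefficient --- merely fill in details the paper leaves implicit.
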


\section{Connection to the $3$-spin relations}
\label{3spin}

Theorem~\ref{main} can be viewed as a collection of tautological relations in $A^*(\M_{g,n})$, which we refer to as the {\it double ramification cycle relations}.  Given that Pixton's $3$-spin relations, described in \cite{Pixton} and proved in \cite{PPZ}, are conjectured to generate all relations in the tautological ring, one would expect the double ramification cycle relations to follow from these.  This is indeed the case, as we explain in this section.

More precisely, what we prove is that the double ramification cycle relations $[\Omega_{g,A}]_d$ in which exactly one of the arguments $a_i$ is negative lie in the ideal of the strata algebra generated by the $3$-spin relations.  These are the $A$ for which the arguments of Sections \ref{proof} and \ref{Rmatrices} apply, and thus, for which the double ramification cycle relations can be understood in terms of the equivariant Gromov--Witten theory of $[\C/\Z_r]$; see Remark \ref{GWtheory}.  By polynomiality of $\Omega_{g,A}$ in $A$ (Lemma~\ref{polynomiality}), the relations for these choices of $A$ are sufficient to derive all of the double ramification cycle relations.

It should be noted that the arguments of this section do not apply to the relations of Theorem~\ref{thm2}.  To address this more general situation, one would need to construct a new variant of moduli spaces of stable maps and study its intersection theory.

\subsection{Strata-valued field theories}

We first recall the definition of the \emph{strata algebra}, following \cite{GPNontautological} and \cite{PixtonThesis}.  Let $\Gamma$ be a stable graph of genus $g$ with $n$ legs, let
\[\ \M_{\Gamma}:= \prod_{\text{vertices }v} \M_{g(v),\text{val}(v)},\]
and let $\iota_{\Gamma}: \M_{\Gamma} \rightarrow \M_{g,n}$ be the gluing morphism as in \eqref{iotagamma}.  A \emph{basic class} on $\M_{\Gamma}$ is defined as an expression of the form
\[\gamma:=\prod_{\text{vertices }v} \theta_v,\]
where $\theta_v$ is a monomial in the $\kappa$ and $\psi$ classes on the vertex moduli space $\M_{g(v),\text{val}(v)}$.

The strata algebra $\mathcal{S}_{g,n}$ is generated as a
$\C$-vector space by pairs $[\Gamma, \gamma]$, where $\Gamma$ is a
stable graph and $\gamma$ is a basic class on $\M_{\Gamma}$.
A multiplication rule and a grading
$\mathcal{S}_{g,n} = \bigoplus_{d = 0}^{3g - 3 + n} \mathcal{S}_{g,n}^d$ can be defined so
that the association
\[Q: \mathcal{S}_{g,n} \rightarrow A^*(\M_{g,n})\]
\[[\Gamma, \gamma] \mapsto \iota_{\Gamma*}(\gamma)\]
is a degree-preserving homomorphism of rings.  It was proved by Graber-Pandharipande \cite{GPNontautological} that the classes $Q([\Gamma, \gamma])$ are additive generators of the tautological ring.  Thus, tautological relations can be understood explicitly as elements of the kernel of $Q$.

Generalizing the notion of a CohFT, we define a \emph{strata-valued field theory} as a finite-dimensional vector space $V$ equipped with a nondegenerate pairing $\eta \in V$, a distinguished element $\one \in V$, and a system of homomorphisms
\begin{equation*}
  \Omega_{g,n}: V^{\otimes n} \rightarrow \mathcal{S}_{g,n}
\end{equation*}
for each $g$ and $n$, satisfying the same compatibility axioms as
required for CohFTs; since the $S_n$-action, and the pullbacks under
the gluing and forgetful maps can all be defined at the level of the
strata algebra, these axioms all still make sense.

Via the analogue in cohomology of the homomorphism $Q$, any strata-valued field theory induces a CohFT.  Moreover, for semisimple CohFTs, the graph sum in the Givental-Teleman reconstruction yields a natural lift to a strata-valued field theory.  In particular, the two CohFTs $\OmegaZvr_{g,n}$ and $\OmegaCZr_{g,n}$ considered in Section \ref{Raction} can both be lifted to strata-valued field theories.

As in Section \ref{proof}, fix a collection of integers $A=(a_1, \ldots, a_n)$ whose sum is zero, such that $a_1< 0$ and $a_i \geq 0$ for all $i \geq 2$.  Let $A'$ be as in equation \eqref{A'}.  Lifting $\OmegaCZr_{g,n}$ to a strata-valued field theory as explained above, we define
\[\Omega_{g,A} := \left.\frac{1}{r^g} \OmegaCZr_{g,n}(\zeta_{a_1'} \otimes \cdots \otimes \zeta_{a_n'})\right|_{r=0} \in \mathcal{S}_{g,n}.\]
(We have abused notation somewhat by using the same symbol to denote $\Omega_{g,A}$ and its lift to the strata algebra.)  Throughout this subsection, then, the double ramification cycle relations are viewed as the statement that
\begin{equation*}
  [\Omega_{g,A}]_d \in \ker(Q)
\end{equation*}
for all $A$ as above and all $d > g$.

The coefficients of $[\Omega_{g,A}]_d$, as explained below, are the coefficients of negative powers of the equivariant parameter in the equivariant Gromov--Witten
theory of $[\C/\Z_r]$.
On the other hand, the general machinery of \cite{JandaThesis}, which
also captures the 3-spin relations, is related to the existence of
non-semisimple shifts.
While it is possible to shift the Gromov--Witten theory of $[\C/\Z_r]$
to non-semisimple points, the shifted theory no longer admits a non-equivariant limit, and thus it is not clear how to relate the resulting
relations to the double ramification cycle relations.
As a substitute for $[\C/\Z_r]$, we study the related orbifold
projective line, which has the crucial property that it is always
well-defined non-equivariantly.

\subsection{Equivariant orbifold projective line}
\label{StratFTs}

Let $X = \P[r,1]$ denote an orbifold projective line, with one orbifold point of isotropy $\Z_r$ located at $0$.  More explicitly, $X$ can be expressed as a weighted projective space
\[X  = \frac{\C^2 \setminus \{0\}}{\C^*}\]
in which $\C^*$ acts by $\sigma \cdot (x,y) = (\sigma^r x, \sigma y)$.  Let $\C^*$ act on $X$ by $t \cdot [x,y] = [x, t y]$, and let $\lambda$ denote the equivariant parameter.

One can encode the equivariant orbifold Gromov--Witten theory of $X$ in
a CohFT on the vector space $H^*_{CR}(X)$ depending on $\lambda$, a
Novikov variable $q$, and a formal coordinate $\t \in H^*_{CR}(X)$ as
follows. 
For any $v_1, \ldots, v_n \in H^*_{CR}(X)$ and any $g,n$ such that
$2g-2+n > 0$, define
\begin{multline}
  \label{Omegat}
  \Omega^{\t}_{g,n}(v_1 \otimes \ldots \otimes v_n):= \\
  \sum_{d,m \geq 0} \frac{q^d}{m!} (p_{d,m})_*\left(\prod_{i=1}^n
    \ev_i^*(v_i) \cap \prod_{i=n+1}^{n+m} \ev_i^*(\t)\cap
    [\M_{g,n+m}(X,d)]^{\vir}_{\C^*}\right),
\end{multline}
where $p_{d,m}: \M_{g,n+m}(X,d) \rightarrow \M_{g,n}$ is the forgetful
map.
Note that $\Omega^\t_{g, n}$ is obtained from
$\Omega^{\mathbf 0}_{g, n}$ by the shift $\t$.

There are two natural lifts of $\Omega^{\t}_{g,n}$ to a strata-valued
field theory.
The first of these, which we denote by $\Omega_{g,n}^{\text{rec},\t}$,
is given by Givental-Teleman reconstruction.
In order to apply reconstruction, we first must verify generic
semisimplicity, and in order to make sense of generic semisimplicity,
we need to ensure that the infinite sums in \eqref{Omegat} converge:

\begin{lemma}
\label{semisimple}
The CohFT $\Omega^{\t}_{g,n}$ is regular in all of its parameters
$\lambda$, $q$ and $\mathbf t$.
For any fixed $(\lambda,q) \neq (0,0)$, $\Omega^{\t}_{g,n}$
is semisimple for generic $\t$.
\begin{proof}
  The regularity of $\Omega^{\t}$ is a consequence of the grading by
  cohomological degree: a natural homogeneous basis for the
  equivariant Chen-Ruan cohomology is given by the unit $\one$, the
  equivariant hyperplane class $h = [0]$, and the generators
  $\zeta_0, \ldots, \zeta_0^{r-1}$ of the twisted sectors.
  We can write
  \begin{equation*}
    \t = t_0\one + t_{1/r}\zeta_0 + \cdots + t_{(r-1)/r}\zeta_0^{r-1} + t_1h.
  \end{equation*}
With the grading
  $\deg(\lambda) = 1$, $\deg(qe^{t_1}) = 1 + \frac 1r$, and
  $\deg(t_i) = 1 - i$ for $i \neq 1$, the divisor equation implies that for
  fixed $g$ and homogeneous arguments $v_1, \dotsc, v_n$, the class
  $\Omega_{g,n}^{\t}(v_1 \otimes \cdots \otimes v_n)$ is homogeneous.
  Since the degree of each parameter is positive, the class is
  polynomial in its parameters, and regularity follows.

  Semisimplicity is an open condition on $\t$, so it suffices to
  find a single value of $\t$ for which $\Omega_{g,n}^{\t}$ is
  semisimple.
  Set all coordinates of $\t$ except for $t_1$ equal to zero.
  On this line on the Frobenius manifold, the CohFT is semisimple away
  from the vanishing locus of the discriminant
  \begin{equation*}
    d_{\lambda, q}(t_1) = (-1)^{\binom{r + 1}{2}}\left(\frac{(r + 1)^{r + 1}}{r^r} (qe^{t_1})^r - \frac 1r \lambda^{r + 1}\right)
  \end{equation*}
  of its defining polynomial (see Appendix~\ref{setup}).
  As long as $(\lambda, q) \neq (0,0)$, there exists a choice of $t_1$
  for which $d_{\lambda,q}(t_1) \neq 0$, and hence this choice makes
  $\Omega^{(0, \ldots, 0, t_1)}$ semisimple.
\end{proof}
\end{lemma}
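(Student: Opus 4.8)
The plan is to prove the two assertions separately: first regularity, which amounts to the convergence of the sums over $d$ and $m$ in \eqref{Omegat}, and then generic semisimplicity. For regularity I would argue by a grading and dimension count. Equip $H^*_{CR}(X)$ with the homogeneous basis given by the unit $\one$, the equivariant hyperplane class $h = [0]$, and the twisted-sector generators $\zeta_0, \ldots, \zeta_0^{r-1}$, and assign to the parameters the degrees $\deg(\lambda) = 1$, $\deg(q e^{t_1}) = 1 + \frac1r$, and $\deg(t_i) = 1 - i$ for the coordinates $t_i$ of $\t$ with $i \neq 1$. The key input is the virtual dimension formula for $\M_{g,n+m}(X,d)$ together with the divisor equation, which converts insertions of the divisor part $t_1 h$ of $\t$ into the factor $e^{t_1}$ accompanying the Novikov variable $q$. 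These combine to show that, for fixed $g$ and fixed homogeneous arguments $v_1, \ldots, v_n$, the class $\Omega^{\t}_{g,n}(v_1 \otimes \cdots \otimes v_n)$ is homogeneous of a single total degree. Since every parameter has strictly positive degree, only finitely many monomials in $\lambda, q, \t$ can occur in each cohomological degree of the output, so the class is polynomial in its parameters and regularity follows.

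For semisimplicity I would use that semisimplicity of the quantum product is an open and dense condition, cut out by the nonvanishing of a discriminant of the associated Frobenius algebra. It therefore suffices to exhibit, for each fixed $(\lambda, q) \neq (0,0)$, a single value of $\t$ at which $\Omega^{\t}$ is semisimple. I would restrict to the line on the Frobenius manifold along which every coordinate of $\t$ except the divisor coordinate $t_1$ vanishes. Along this line the relevant algebra is the small equivariant quantum cohomology of $\P[r,1]$, which is determined by its genus-zero three-point invariants and can thus be written down explicitly.

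The computational heart of the argument, which I expect to be the main obstacle, is then to identify the defining polynomial of this quantum product and to compute its discriminant. This is the calculation I would carry out in Appendix~\ref{setup}; its outcome is
\[
d_{\lambda, q}(t_1) = (-1)^{\binom{r + 1}{2}}\left(\frac{(r + 1)^{r + 1}}{r^r} (qe^{t_1})^r - \frac 1r \lambda^{r + 1}\right).
\]
Once this formula is in hand the conclusion is immediate, since the Frobenius algebra is semisimple exactly where $d_{\lambda, q}(t_1) \neq 0$, and the right-hand side is a nonzero function of $t_1$ as soon as $(\lambda, q) \neq (0,0)$. Indeed, if $\lambda \neq 0$ and $q = 0$ it reduces to the nonzero constant $(-1)^{\binom{r+1}{2}}\frac1r\lambda^{r+1}$, while if $q \neq 0$ the term $(qe^{t_1})^r$ is nonconstant in $t_1$, so $d_{\lambda,q}$ vanishes on at most a discrete set. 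Choosing $t_1$ away from this locus produces the desired semisimple point $\t = (0, \ldots, 0, t_1)$, completing the proof.
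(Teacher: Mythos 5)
Your proposal is correct and follows essentially the same route as the paper: the identical grading argument (with $\deg(\lambda)=1$, $\deg(qe^{t_1})=1+\frac1r$, $\deg(t_i)=1-i$) combined with the divisor equation for regularity, and openness of semisimplicity plus the explicit discriminant $d_{\lambda,q}(t_1)$ along the small quantum cohomology line for generic semisimplicity. The only difference is cosmetic: you spell out the final nonvanishing case analysis ($q=0$ versus $q\neq 0$) that the paper leaves implicit.
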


For any $\t$ for which $\Omega_{g,n}^{\t}$ is semisimple,
$\Omega_{g,n}^{\t}$ can be expressed as a graph sum via the action of
an $R$-matrix on a TFT.
As shown in \cite{JandaThesis}, the coefficients of the $R$-matrix are
regular, except that they may acquire poles at the zero locus of the
discriminant
\begin{equation*}
  d_{\lambda,q}(\t) \in \C[\lambda, t_{1/r}, \dotsc, t_{(r-1)/r}, qe^{t_1}].
\end{equation*}
Thus, $\Omega_{g,n}^{\t}$ defines a strata-valued field theory, which
we denote by
\begin{equation*}
  \Omega_{g,n}^{\text{rec}, \t}(v_1 \otimes \ldots \otimes v_n) \in \mathcal{S}_{g,n} \otimes U,
\end{equation*}
where
\begin{equation*}
  U := \C[\lambda^{\pm 1}, t_{1/r}, \dotsc, t_{(r-1)/r}, qe^{t_1}, (d_{\lambda,q}(\t))^{-1}].
\end{equation*}

The other lift of $\Omega_{g,n}^{\t}$ to a strata-valued field theory, which we denote by $\Omega^{\text{loc},\t}_{g,n}$, follows from localization.  More specifically, just as in the case of ordinary $\P^1$, the localization formula expresses $[\M_{g,n+m}(X,d)]^{\vir}_{\C^*}$ as a sum over decorated graphs, in which vertices indicate components contracted to $0$ or $\infty$ in $X$ and edges indicate noncontracted components.  The contributions of each such graph have been explicitly calculated by Johnson \cite{Johnson} and are recalled in the Appendix.  In particular, the moduli at each vertex $v$ is of the form $\M^{0/r}_{g(v),A(v)}$ for some $g(v) \geq 0$ and some tuple of integers $A(v)$, and the contribution of $v$ to the localization expression for $(p_{d,m})_*([\M_{g,n+m}(X,d)]^{\vir}_{\C^*})$ is of the form
\begin{equation}
\label{vertexcontr}
\sum_{i=0}^{\infty} \left(\frac{\lambda_{j(v)}}{r_{j(v)}}\right)^{g(v) - 1 + \iota(\rho(v)) -i} \phi_*\bigg(c_i(-R\pi_*\mathcal{L}_{A(v)})\bigg),
\end{equation}
where the notation is defined in Appendix \ref{localization}.  By applying Chiodo's formula (which, for vertices contracted to $\infty$, reduces to Mumford's formula for the Chern characters of the Hodge bundle), one obtains a natural lift of \eqref{vertexcontr} to the strata algebra.  Doing this at each vertex of each graph in the localization expression for \eqref{Omegat} yields the definition of
\[\Omega_{g,n}^{\text{loc},\t}(v_1 \otimes \ldots \otimes v_n) \in \mathcal{S}_{g,n}(\lambda)[[q,\t]].\]

\subsection{Nonequivariant limit and the double ramification cycle relations}
\label{noneq}

We focus first on the strata-valued field theory $\Omega_{g,n}^{\text{loc},\t}$.  Consider the basepoint $\t = \mathbf{0}$, and set $q=0$, so that the only graph contributing to the localization formula consists of a single vertex.  We then have the following:

\begin{lemma}
\label{lemma1}
For each $A$ with exactly one negative entry and each $d >g$, the double ramification cycle relation $[\Omega_{g,A}]_d \in \mathcal{S}_{g,n}^d$ lies in the ideal of $\mathcal{S}_{g,n}$ generated by the coefficients of negative powers of $\lambda$ in $\Omega^{\text{loc}, \mathbf{0}}_{g,n}(\zeta_{a_1'} \otimes \ldots \otimes \zeta_{a_n'})|_{q=0}$.

\begin{proof}
Since at least one of the integers $a_i'$ is nonzero, the single vertex in the localization graph must map to $0 \in X$.  The fixed locus associated to this graph is $\M_{g,A'}$, and, by the localization contribution recalled in \eqref{vertexcontr}, we have
\begin{equation}
\label{locsum}
\Omega^{\text{loc}, \mathbf{0}}_{g,n}(\zeta_{a_1'} \otimes \ldots \otimes \zeta_{a_n'})|_{q=0}=\sum_{i=0}^{\infty} \left(\frac{\lambda}{r}\right)^{g-i}\phi_*\bigg(c_i(-R\pi_*\mathcal{L}_{A'})\bigg),
\end{equation}
where the right-hand side is lifted to the strata algebra via Chiodo's formula.  In other words, up to a factor of a power of $r$, $[\Omega_{g,A'}^r]_d$ agrees with the coefficient of $\lambda^{g-d}$ in $\Omega^{\text{loc}, \mathbf{0}}_{g,n}(\zeta_{a_1'} \otimes \ldots \otimes \zeta_{a_n'})|_{q=0}$ as elements of the strata algebra.   After taking $r$ sufficiently large and taking the coefficient of the appropriate power of $r$, the lemma follows.
\end{proof}
\end{lemma}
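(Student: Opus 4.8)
The plan is to unwind the single-vertex localization contribution at $q = 0$ and $\t = \mathbf 0$, to match its coefficients of negative powers of $\lambda$ against the definition of $[\Omega_{g,A}]_d$, and to recover the latter as a coefficient of a fixed power of $r$. First I would determine which localization graphs survive. Setting $q = 0$ kills every positive-degree contribution, so only the degree-zero graph remains, and this graph has a single vertex carrying all $n$ legs. Because $\sum_i a_i' = r$ with, say, $a_1' = a_1 + r \in (0,r)$, the monodromies $a_i'$ are not all trivial modulo $r$, so the vertex cannot map to the free fixed point $\infty \in X$ and must map instead to the orbifold point $0$, whose isotropy is $\Z_r$. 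The associated fixed locus is $\M^{0/r}_{g,A'}$, and substituting $g(v) = g$, $A(v) = A'$, $\lambda_{j(v)} = \lambda$, $r_{j(v)} = r$, and $\iota(\rho(v)) = \tfrac1r\sum_i a_i' = 1$ into the vertex contribution \eqref{vertexcontr} collapses the exponent $g(v) - 1 + \iota(\rho(v)) - i$ to $g - i$, yielding
\begin{equation*}
\Omega^{\text{loc}, \mathbf 0}_{g,n}(\zeta_{a_1'} \otimes \cdots \otimes \zeta_{a_n'})\big|_{q=0} = \sum_{i \geq 0} \Big(\frac{\lambda}{r}\Big)^{g-i}\phi_*\big(c_i(-R\pi_*\mathcal{L}_{A'})\big),
\end{equation*}
with the right-hand side lifted to $\mathcal{S}_{g,n}$ by Chiodo's formula.

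Next I would compare this with $[\Omega_{g,A}]_d$. Expanding the weighted total Chern class gives
\begin{equation*}
\frac{1}{r^g}\OmegaCZr_{g,n}(\zeta_{a_1'} \otimes \cdots \otimes \zeta_{a_n'}) = \frac{1}{r^{2g-1}}\phi_*\big(c_{(r^2)}(-R\pi_*\mathcal{L}_{A'})\big) = \sum_{i \geq 0} r^{2i-2g+1}\phi_*\big(c_i(-R\pi_*\mathcal{L}_{A'})\big),
\end{equation*}
so, writing $P_d := \phi_*(c_d(-R\pi_*\mathcal{L}_{A'})) \in \mathcal{S}_{g,n}^d$ for the Chiodo lift, the class $[\Omega_{g,A}]_d$ is the constant-in-$r$ part of $r^{2d-2g+1}P_d$, that is, the coefficient of $r^{2g-2d-1}$ in the $r$-polynomial $P_d$. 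On the other hand, the displayed localization formula shows that the coefficient of $\lambda^{g-d}$ is exactly $r^{d-g}P_d$, which for $d > g$ is a negative power of $\lambda$ and therefore a generator of the ideal in question. A short bookkeeping of exponents then identifies $[\Omega_{g,A}]_d$ with the coefficient of $r^{g-d-1}$ in this same generator.

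It then remains to pass from a single coefficient of an $r$-polynomial to a membership statement. Since $P_d$ is polynomial in $r$ for $r \gg 0$, the coefficient of any fixed power of $r$ is a finite $\C$-linear combination of the values of $P_d$ at sufficiently many large integers $r$, by Lagrange interpolation (equivalently, by finite differences). As each such value, up to the invertible scalar $r^{d-g}$, is the $\lambda^{g-d}$-coefficient of the localization class and hence lies in the ideal, the same linear combination exhibits $[\Omega_{g,A}]_d$ as an element of the ideal, which is the assertion.

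I expect the main obstacle to be the careful accounting of powers of $r$ rather than any conceptual difficulty. Two points require vigilance. First, one must confirm that $P_d = \phi_*(c_d(-R\pi_*\mathcal{L}_{A'}))$ is genuinely polynomial in $r$ at the level of $\mathcal{S}_{g,n}$; this demands tracking the fact that $\phi$ has degree $r^{2g-1-c}$, rather than $r^{2g-1}$, on a codimension-$c$ boundary stratum, the same ghost-automorphism subtlety already handled in the proof of Lemma~\ref{claim1}. Second, the clean collapse of the exponent in \eqref{vertexcontr} to $g - i$, and hence the simple form of the localization formula above, rests on the identity $\iota(\rho(v)) = \tfrac1r\sum_i a_i' = 1$, which must be checked against the conventions of Appendix~\ref{localization}.
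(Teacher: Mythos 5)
Your proof is correct and takes essentially the same approach as the paper's: the unique degree-zero localization vertex mapping to $0 \in X$, the substitution into \eqref{vertexcontr} giving \eqref{locsum}, the identification of the $\lambda^{g-d}$-coefficient with $r^{d-g}\phi_*(c_d(-R\pi_*\mathcal{L}_{A'}))$, and the recovery of $[\Omega_{g,A}]_d$ as a coefficient of a power of $r$. Your explicit interpolation step (extracting a coefficient of an $r$-polynomial as a finite linear combination of evaluations at large $r$) merely spells out what the paper compresses into ``taking $r$ sufficiently large and taking the coefficient of the appropriate power of $r$.''
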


\subsection{Nonsemisimple limit and the $3$-spin relations}

Lemma~\ref{lemma1} shows that the double ramification cycle relations are equivalent to the existence of the nonequivariant limit of
\[Q\left(\Omega^{\text{loc}, \mathbf{0}}_{g,n}(\zeta_{a_1'} \otimes \ldots \otimes \zeta_{a_n'})|_{q=0}\right) \in A^*(\M_{g,n})(\lambda).\]
The $3$-spin relations, on the other hand, arise via the existence of a different limit: the nonsemisimple limit of the reconstruction graph sum.

More specifically, as discussed above, for fixed
$(\lambda, q) \neq 0$, the strata-valued field theory
$\Omega_{g,n}^{\text{rec},\t}$ aquires singularities at values of $\t$
for which the discriminant $d_{\lambda,q}(\t)$ vanishes, reflecting
the failure of reconstruction at these basepoints.
Yet the original CohFT $\Omega^{\t}_{g,n}$ is regular (even
polynomial) in $\t$.
Thus, the regularity of
$Q(\Omega_{g,n}^{\text{rec},\t}) = \Omega_{g,n}^{\t}$ is equivalent to
the condition that the coefficients of
$Q(\Omega_{g,n}^{\text{rec},\t})$ with poles in $d_{\lambda,q}(\t)$
lie in the kernel of $Q$.

This yields a family of tautological relations, and similar
reasoning produces relations associated to any CohFT for which generic
shifts are semisimple.
Following \cite[Definition 3.3.1]{JandaThesis}, we define the
relations $I^{\Lambda}_{g,n} \subset \mathcal{S}_{g,n}$ associated to
a generically semisimple CohFT $\Lambda$ to be the smallest system of
ideals that is stable under pushforwards via the gluing and forgetful morphisms,
and containing the relations from poles in the discriminant
described above.
In particular, taking $\Lambda$ to be the $3$-spin CohFT described in
\cite{PPZ}, this process yields the ideal of the $3$-spin relations.

Surprisingly, these relations are independent of the particular CohFT $\Lambda$ used to generate them:

\begin{theorem}\cite[Theorem 3.3.6]{JandaThesis}
  \label{mrelations}
  Let $\Lambda$ be a generically semisimple, but not everywhere
  semisimple, CohFT.
  Then $I^{\Lambda}_{g,n} = \mathcal{P}_{g,n}$, where
  $\mathcal{P}_{g,n}$ is the ideal of the $3$-spin relations.
\end{theorem}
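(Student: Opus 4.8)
The plan is to show that the relations $I^{\Lambda}_{g,n}$ detect only the local structure of $\Lambda$ at a generic point of its non-semisimple locus, and that this local structure is universally of $A_2$ type, so that the relations it produces reproduce Pixton's $3$-spin relations regardless of the ambient $\Lambda$. Write $\Lambda^\t = R_\t \cdot \omega_\t$ for the Givental--Teleman reconstruction of $\Lambda$ on the semisimple locus, and recall that the coefficients of the strata-valued lift $\Lambda^{\text{rec},\t}$ are regular away from the discriminant divisor $D$ along which semisimplicity fails. Since $Q(\Lambda^{\text{rec},\t}) = \Lambda^{\t}$ is everywhere regular, the principal parts of the graph-sum coefficients along $D$ must lie in $\ker Q$, and by definition $I^{\Lambda}_{g,n}$ is the smallest tautological ideal (stable under gluing and forgetful pushforwards) that they generate. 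Applying this to the $3$-spin CohFT of \cite{PPZ} recovers $\mathcal{P}_{g,n}$, so the content of the theorem is the independence of $I^{\Lambda}_{g,n}$ from the choice of $\Lambda$.

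First I would reduce to the generic transverse behavior along $D$. A stratification argument shows that $D$ is generically reduced, and that its generic point corresponds to the collision of exactly two canonical coordinates $u_i = u_j$ of the underlying Frobenius manifold, the remaining idempotents staying distinct. I would then argue that the full polar ideal is generated, under the closure operations defining $I^{\Lambda}_{g,n}$, by the leading-order pole of the single-degeneration contributions at such a generic point: higher-order poles and graphs in which more than one edge degenerates simultaneously correspond to deeper strata of $D$ and are recovered as gluing and forgetful pushforwards of the generic-point relations. Making this precise requires a careful Laurent expansion in a coordinate transverse to $D$, together with a uniform bound on the pole order contributed by each decorated-graph summand.

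Next I would invoke the local classification of semisimple Frobenius manifolds near a generic caustic: at a point where exactly two canonical coordinates collide, the germ of the Frobenius manifold is isomorphic to the $A_2$ Frobenius manifold times a semisimple factor, and the $R$-matrix splits compatibly with this decomposition. The semisimple factor contributes only regular terms, so the principal part of the reconstruction, and hence the relation it produces, is governed entirely by the two-dimensional $A_2$ block, up to a gauge transformation that leaves the generated ideal unchanged. Because this leading $A_2$ block is independent of the ambient $\Lambda$, the resulting relations are the same for every generically-semisimple-but-not-everywhere-semisimple CohFT. Matching this to the $3$-spin theory of \cite{PPZ}, whose Frobenius manifold is itself the $A_2$ Frobenius manifold, then identifies the common ideal with $\mathcal{P}_{g,n}$, and a check of normalizations of the two principal parts yields $I^{\Lambda}_{g,n} = \mathcal{P}_{g,n}$.

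The main obstacle is the universality claim underlying the reduction step: one must show rigorously that the relation extracted from a pole along $D$ depends only on the leading transverse singularity of $R_\t$ and on the local $A_2$ germ, and not on the finer global data of $\Lambda$. Controlling the exact order of the pole contributed by each graph, and verifying that the subleading and multi-edge contributions are generated by the leading single-degeneration relation under the tautological closure operations, is the technical heart of the argument; this is precisely the analysis carried out in \cite{JandaThesis}.
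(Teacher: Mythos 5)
The first thing to note is that this paper contains no proof of Theorem~\ref{mrelations}: it is imported verbatim from the second author's thesis \cite[Theorem 3.3.6]{JandaThesis} and used as a black box (its only role here is to feed into Lemma~\ref{recsum} and hence Theorem~\ref{main2}). So there is no in-paper proof to compare yours against; the only meaningful comparison is with the cited reference. Your outline does reproduce the strategy of that reference: define $I^{\Lambda}_{g,n}$ via pole cancellation of the reconstruction graph sum along the discriminant, reduce to the generic stratum where exactly two canonical coordinates collide, and argue that the singular behavior there is universal of $A_2$ type, hence matches the $3$-spin theory of \cite{PPZ}.

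As a standalone proof, however, your write-up has two genuine gaps, and they are precisely the hard steps. First, the claim that at a generic point of the caustic the germ of the Frobenius manifold ``is isomorphic to the $A_2$ Frobenius manifold times a semisimple factor, and the $R$-matrix splits compatibly'' is stronger than what is true. Hertling's decomposition theorem gives such a product splitting only at the level of $F$-manifolds (the multiplication), not of the flat metric, and the Givental--Teleman $R$-matrix is not a local object attached to the colliding pair of idempotents: it is determined globally by the flat structure and the Euler field, so its singular asymptotics near the discriminant must be extracted by an honest expansion in a transverse coordinate. Showing that this singular part agrees, up to regular gauge factors, with the $A_2$ $R$-matrix is the main content of \cite{JandaThesis}; it cannot be obtained by invoking a product decomposition that does not exist for the metric. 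Second, your reduction step---that higher-order poles and multi-edge degenerations are generated, under the gluing and forgetful closure operations, by the leading single-degeneration relation at a generic point---is asserted rather than proved, and your final sentence concedes this by deferring ``the technical heart'' to \cite{JandaThesis}. Since that is exactly the theorem you are supposed to be proving, the argument as written is circular: right strategy, but both key steps (universal $A_2$ asymptotics of the $R$-matrix, and generation of the full polar ideal from the generic stratum) are assumed rather than established.
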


Applying Theorem~\ref{mrelations} to the CohFT $\Omega_{g,n}^{\t}$
associated to the equivariant orbifold projective line for any fixed
$(\lambda, q) \neq 0$, we obtain the following:

\begin{lemma}
  \label{recsum}
  The image of $\Omega_{g,n}^{\text{rec},\t}$ in
  $(\mathcal{S}_{g,n}/\mathcal{P}_{g,n}) \otimes U$ is
  regular in the variables $\lambda,q$, and $\t$.
\end{lemma}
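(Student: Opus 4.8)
The plan is to obtain the lemma as a direct application of Theorem~\ref{mrelations} to the CohFT $\Lambda = \Omega^{\t}_{g,n}$ of the equivariant orbifold line, for an arbitrary fixed $(\lambda,q) \neq (0,0)$. First I would verify the two hypotheses of that theorem. Generic semisimplicity along the $\t$-direction is furnished by Lemma~\ref{semisimple}. To see that $\Omega^{\t}$ is \emph{not} everywhere semisimple, I would use the explicit discriminant
\[
d_{\lambda,q}(t_1) = (-1)^{\binom{r+1}{2}}\left(\frac{(r+1)^{r+1}}{r^r}(qe^{t_1})^r - \frac 1r \lambda^{r+1}\right)
\]
recorded in the proof of Lemma~\ref{semisimple} (see Appendix~\ref{setup}): for $(\lambda,q) \neq (0,0)$ this is a nonconstant polynomial, so its zero locus is nonempty and the CohFT degenerates there. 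The feature that makes this usable---and that motivates working with $\P[r,1]$ rather than $\CZr$---is that, by Lemma~\ref{semisimple}, $\Omega^{\t}$ is regular (indeed polynomial) in all of $\lambda$, $q$, and $\t$, hence extends across the nonsemisimple locus.

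Next I would unwind the definition of $I^{\Lambda}_{g,n}$ from \cite[Definition 3.3.1]{JandaThesis} against the structure of the reconstruction graph sum. The $R$-matrix of $\Omega^{\t}$ has entries in $U$ whose only denominators are powers of $d_{\lambda,q}(\t)$, so carrying out the Givental--Teleman graph sum presents $\Omega_{g,n}^{\text{rec},\t}$ as an element of $\mathcal{S}_{g,n} \otimes U$ whose principal part along the discriminant is a combination of basic classes. Because $Q(\Omega_{g,n}^{\text{rec},\t}) = \Omega_{g,n}^{\t}$ is regular by Lemma~\ref{semisimple}, the coefficients appearing in this principal part all lie in $\ker(Q)$; and by the cited definition these coefficients, together with their closure under pushforward along the gluing and forgetful morphisms, are precisely the generators of $I^{\Lambda}_{g,n}$. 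Consequently the image of $\Omega_{g,n}^{\text{rec},\t}$ in $(\mathcal{S}_{g,n}/I^{\Lambda}_{g,n}) \otimes U$ has no poles along $d_{\lambda,q}(\t)$, that is, it is regular in $\lambda$, $q$, and $\t$.

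Finally, Theorem~\ref{mrelations} identifies $I^{\Lambda}_{g,n}$ with the $3$-spin ideal $\mathcal{P}_{g,n}$, which converts the previous paragraph into the assertion that the image of $\Omega_{g,n}^{\text{rec},\t}$ in $(\mathcal{S}_{g,n}/\mathcal{P}_{g,n}) \otimes U$ is regular. Since $\mathcal{P}_{g,n}$ does not depend on $(\lambda,q)$, the conclusion holds uniformly in the fixed pair, so no patching over the choice of $(\lambda,q)$ is required.

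The step I expect to be the main obstacle is the bookkeeping in the second paragraph: one must confirm that \emph{every} pole of the graph sum along $d_{\lambda,q}(\t)$---including those produced by the higher-order-in-$z$ terms of the $R$-matrix after substitution of $\psi$-classes at legs, edges, and, through the translation operator $T$, at vertices---is genuinely captured by generators of $I^{\Lambda}_{g,n}$, and that this cancellation takes place at the level of $\mathcal{S}_{g,n} \otimes U$ rather than merely after applying $Q$. This is exactly where the stability of $I^{\Lambda}$ under the gluing and forgetful morphisms is indispensable, since the principal part at level $(g,n)$ need not be generated by pole-relations at that same level alone.
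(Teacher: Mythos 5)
Your first two paragraphs reproduce the central step of the paper's own proof: applying Theorem~\ref{mrelations} to $\Omega^{\t}_{g,n}$ for a fixed $(\lambda,q)\neq(0,0)$ (with semisimplicity from Lemma~\ref{semisimple} and nonsemisimplicity from the discriminant), and unwinding Definition~3.3.1 of \cite{JandaThesis} so that the poles of the reconstruction along the discriminant are killed in the quotient by $I^{\Lambda}_{g,n}=\mathcal{P}_{g,n}$. The gap is in your final paragraph: the assertion that ``no patching over the choice of $(\lambda,q)$ is required'' dismisses exactly the step the argument cannot do without. What your reasoning delivers is a family of slice-wise statements: for each fixed $(\lambda_0,q_0)\neq(0,0)$, the image of $\Omega^{\text{rec},\t}_{g,n}$ in $(\mathcal{S}_{g,n}/\mathcal{P}_{g,n})\otimes U$ is regular in $\t$. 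The independence of $\mathcal{P}_{g,n}$ from $(\lambda,q)$ makes these statements live in a common quotient, but it gives no information whatsoever at points of the locus $\{\lambda=q=0\}$, which no slice contains; moreover $U$ inverts $\lambda$ as well as the discriminant, so ``no poles along $d_{\lambda,q}(\t)$ slice by slice'' is not yet the assertion ``regular in $\lambda$, $q$, and $\t$.'' The paper closes this gap with two further ingredients: regularity of the reconstruction in \emph{all} variables on the semisimple locus (the \cite{JandaThesis} statement that the $R$-matrix coefficients only acquire poles on the discriminant locus), which together with the slice-wise statement yields regularity at every point outside $\{(\lambda,q)=(0,0)\}$; and then the observation that this locus has codimension two, so the polar locus of the image --- necessarily a divisor --- must be empty, i.e., the image is regular everywhere.

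This omission is not cosmetic, because the point your slices never reach is the one the lemma is used at. In the proof of Theorem~\ref{main2}, Lemmas~\ref{recsum} and \ref{comparison} are invoked to show that $\Omega^{\text{loc},\mathbf{0}}_{g,n}\big|_{q=0}$ is regular in $\lambda$ as an element of $(\mathcal{S}_{g,n}/\mathcal{P}_{g,n})(\lambda)$ --- that is, that the nonequivariant limit $\lambda\to 0$ exists after setting $q=0$ and $\t=\mathbf{0}$. The relevant point $(\lambda,q,\t)=(0,0,\mathbf{0})$ lies precisely in the codimension-two locus $\{\lambda=q=0\}$ excluded by your argument, so the slice-wise version of the lemma you establish is too weak to produce the double ramification cycle relations. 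To repair the proof, you should retain your application of Theorem~\ref{mrelations}, add the statement that the graph sum is regular in all variables on the semisimple locus, and conclude with the codimension-two extension argument rather than denying its necessity.
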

\begin{proof}
  Let $\overline\Omega_{g,n}^{\text{rec},\t}$ be the image of
  $\Omega_{g,n}^{\text{rec},\t}$ in
  $(\mathcal{S}_{g,n}/\mathcal{P}_{g,n}) \otimes U$.  For $\t$ in the open locus of semisimple basepoints, $\overline\Omega_{g,n}^{\text{rec},\t}$ is regular in all of its variables, while in the non-semisimple locus, Theorem 3 implies that $\overline\Omega_{g,n}^{\text{rec},\t}$ is regular in $\t$ for fixed $(\lambda,q) \neq (0,0)$.  Therefore, $\overline\Omega_{g, n}^{\text{rec},\t}$ is regular in
  all variables outside of the locus $\{(\lambda,q) = (0,0)\}$.
  Since this locus has codimension two, it follows that
  $\overline\Omega_{g, n}^{\text{rec},\t}$ is regular everywhere. 
\end{proof}

\subsection{Comparison of the strata-valued field theories}

In order to prove that the $3$-spin relations imply the double ramification cycle relations, what remains is to compare the two strata-valued field theories.  If one identifies negative powers in $d_{\lambda,q}(\t)$ with their Taylor expansions, then:

\begin{lemma}
  \label{comparison}
  One has
  $\Omega_{g,n}^{\text{loc},\t} = \Omega_{g,n}^{\text{rec},\t}$.
\end{lemma}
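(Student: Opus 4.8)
The plan is to establish the identity graph-by-graph in the strata algebra, rather than merely after applying $Q$. Note first that both $\Omega_{g,n}^{\text{loc},\t}$ and $\Omega_{g,n}^{\text{rec},\t}$ are lifts of the \emph{same} cohomological field theory $\Omega_{g,n}^{\t}$ along the homomorphism $Q$, so that $Q(\Omega_{g,n}^{\text{loc},\t}) = Q(\Omega_{g,n}^{\text{rec},\t}) = \Omega_{g,n}^{\t}$. Since $\ker(Q)$ is precisely the space of tautological relations, this cohomological coincidence is far from sufficient, and the content of the lemma is that the two expressions agree already in $\mathcal{S}_{g,n} \otimes U$, before any cancellation in cohomology.

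The strategy is to recognize the localization expansion as itself an $R$-matrix action, performed at the level of the strata algebra, and then to invoke the uniqueness of Givental-Teleman reconstruction. First I would reorganize the localization graph sum into the universal form recalled in Section~\ref{Raction}. The vertex contributions \eqref{vertexcontr} (given by Chiodo's formula \cite{ChiodoTowards} at vertices over $0$ and by Mumford's formula at vertices over $\infty$) must be matched with a translated vertex tensor $(T\omega^{\text{loc}})$, with the $m$-fold summation over extra marked points carrying $\t$-insertions in \eqref{Omegat} accounting for the translation operator $T$; the legs must acquire the factors $(R^{\text{loc}})^{-1}(\psi)$; and, most importantly, the edge contributions of Johnson \cite{Johnson} --- a sum over the degree and $\Z_r$-monodromy of the non-contracted rational component connecting $0$ and $\infty$, decorated with cotangent classes at the two nodes --- must be resummed into the edge bivector $\frac{\eta^{-1} - (R^{\text{loc}})^{-1}(\psi_h)\,\eta^{-1}\,(R^{\text{loc}})^{-1}(\psi_{h'})^t}{\psi_h + \psi_{h'}}$. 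The endomorphism-valued series $R^{\text{loc}}(z)$ produced by this resummation is the asymptotic expansion at the fixed points of the fundamental solution of the quantum differential equation of $X$. Crucially, each of these operations is available already in $\mathcal{S}_{g,n}$, since Chiodo's, Mumford's, and Johnson's formulas express the relevant classes directly as boundary pushforwards of monomials in $\psi$ and $\kappa$.

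Once this reorganization is in place, the conclusion is formal. The data $(R^{\text{loc}}, \omega^{\text{loc}})$ are honest cohomological objects --- an endomorphism-valued power series and a topological field theory --- not strata-valued, and by Teleman's theorem \cite{Teleman} the $R$-matrix and TFT of a semisimple CohFT are uniquely determined. Since both lifts produce $\Omega_{g,n}^{\t}$ under $Q$, it follows that $R^{\text{loc}} = R^{\text{rec}}$ and $\omega^{\text{loc}} = \omega^{\text{rec}}$. But $\Omega_{g,n}^{\text{rec},\t}$ is \emph{by definition} the universal $R$-matrix graph sum built from $(R^{\text{rec}}, \omega^{\text{rec}})$ in the strata algebra, while the previous step exhibits $\Omega_{g,n}^{\text{loc},\t}$ as the same universal graph sum built from $(R^{\text{loc}}, \omega^{\text{loc}})$; with identical inputs and an identical formula, the two outputs coincide in $\mathcal{S}_{g,n} \otimes U$.

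The hard part will be the reorganization above, and within it the edge matching. Resumming Johnson's explicit contribution of a non-contracted orbifold $\P^1$ --- over all degrees and all monodromies at the $\Z_r$-point $0$ --- into the closed propagator form above is where essentially all of the computation resides, together with the careful bookkeeping of the orbifold Euler factors, the powers of $r$ appearing in \eqref{vertexcontr}, and the node multiplicities $q_{l,I}$ produced by Chiodo's formula. One must also verify that the matching is uniform over the base, tracking the poles along the discriminant locus $d_{\lambda,q}(\t) = 0$ so that the identity holds in $\mathcal{S}_{g,n} \otimes U$ after expanding negative powers of $d_{\lambda,q}(\t)$ as Taylor series, exactly as the statement requires. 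These computations are carried out in the Appendix.
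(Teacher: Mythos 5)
Your overall skeleton --- reorganize the localization graph sum so that its ingredients are cohomology-valued (endomorphism series and a TFT rather than strata-valued data), identify those ingredients with the reconstruction data by a uniqueness argument, and then conclude formally that two identical universal graph sums with identical inputs agree in $\mathcal{S}_{g,n} \otimes U$ --- is indeed the skeleton of the paper's proof, and the final formal step together with the appeal to uniqueness of the $R$-matrix (in the paper, via \cite[Lemma~2.2]{MOPPZ}) is sound.

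The gap is in your first step, which you assert rather than prove, and which is false as literally stated. After stabilizing localization graphs and forming generating series of rational tails, the vertex series $\epsilon^{\text{pre}}(z)$ coming from unmarked trees attached at a stable vertex has nonzero $z^0$- and $z^1$-coefficients, whereas the translation series in the Givental-Teleman action, $z\one - zR^{-1}(z)\one$, lies in $z^2 V[[z]]$. So the localization sum is \emph{not} directly of the universal form for any $R$-matrix; the paper must first correct by the series $u$ (a shift justified by the string equation and the genus-zero topological recursion relations) to produce $\widetilde\epsilon(z) \in z^2\mathcal{H}[[z]]$, and only then does the sum acquire the right shape. Moreover, even after this correction the three series $\widetilde\epsilon$, $\widetilde T$, $\widetilde E$ are a priori \emph{independent} data: nothing in the localization formula says they are all induced by a single matrix $R$. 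Your plan to force this by explicitly resumming Johnson's edge contributions into the propagator, with $R^{\text{loc}}$ the stationary-phase asymptotics of the quantum differential equation, is Givental's classical route and could in principle work, but it is precisely the heavy computation you defer, and it is not what the paper does: instead, the paper pins down the corrected series cohomologically, showing $\widetilde T = R^{-1}$ by \cite[Lemma~2.2]{MOPPZ}, showing $\widetilde\epsilon(z) = z(\one - R^{-1}(z)\one)$ by evaluating on idempotents over $\mathcal{M}_{g,1}$ for $g$ large, where Harer stability and Pixton's basis theorem guarantee the relevant tautological classes are linearly independent, and establishing the edge identity by induction on coefficients using rational-tails graphs in $\pi_{(k+1)*}\Omega^{\t}_{g,k+3}$ for large $g$. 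Without either your explicit resummation or these stable-range arguments, the claimed equality $R^{\text{loc}} = R^{\text{rec}}$ has no content, because the existence of $R^{\text{loc}}$ --- i.e., that the localization sum is an honest $R$-matrix action at all --- is exactly what remains unproven.
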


The proof of Lemma~\ref{comparison} is technical, and is relegated to the Appendix.  Assuming it, Theorem~\ref{main2} is now immediate:

\begin{proof}[Proof of Theorem~\ref{main2}]
By Lemma~\ref{lemma1}, it suffices to prove that
\[\Omega_{g,n}^{\text{loc},\mathbf{0}}\bigg|_{q=0} \in \frac{\mathcal{S}_{g,n}}{\mathcal{P}_{g,n}}(\lambda)\]
is regular in $\lambda$, and this follows from Lemmas~\ref{recsum} and \ref{comparison}.
\end{proof}

\appendix

\section{Localization on an orbifold projective line}

Let $X=\P[r_0, r_{\infty}]$ be a projective line with an orbifold point of order $r_0$ at $0$ and an orbifold point of order $r_{\infty}$ at $\infty$, on which $\C^*$ acts by $t \cdot [x,y] = [x,ty]$.  The case needed above is $r_0=r$ and $r_{\infty}=1$, but we consider the more general setting here as it may be of independent interest.

The goal of this Appendix is to prove Lemma 6.5--- that is, that the localization computation of the CohFT associated to the equivariant Gromov--Witten theory of $X$ produces the same result as the computation via Givental-Teleman reconstruction, not only on the level of cohomology but in the strata algebra.  In addition to finishing the proof of Theorem~\ref{main2}, this also immediately implies that the Givental-Teleman classification, which in general is only known to hold in cohomology, also holds in the Chow ring in this case.

Our strategy is closely modeled on \cite{CGT}.

\subsection{Classical and quantum cohomology}
\label{setup}

The torus action on $X$ has two fixed points, $0$ and $\infty$.  We let $\lambda$ denote the equivariant parameter.  The equivariant Chen-Ruan cohomology ring of $X$ is isomorphic to $H_0 \oplus H_{\infty}$, where
\begin{equation*}
  H_0 = \C[\zeta_0]/(\zeta_0^{r_0} - \lambda/r_0), \; \;
  H_{\infty} = \C[\zeta_\infty]/(\zeta_\infty^{r_\infty} -  \lambda/r_\infty).
\end{equation*}
Here, $\zeta_i, \ldots, \zeta_i^{r_i-1}$ for $i \in \{0, \infty\}$ are
the generators of the twisted sectors, and the untwisted sector is
generated by the classes $\phi_0 := [0]/\lambda$ and $\phi_\infty :=
-[\infty]/\lambda$, where $[i]$ are the equivariant classes of the
fixed points.  We denote by $\one = \phi_0 + \phi_\infty$ the identity and by $h=[0]$ the equivariant hyperplane class.

Denote by
\begin{equation*}
  \t = \sum_{i = 1}^{r_0 - 1} t_{i/r_0} \zeta_0^i + \sum_{i = 1}^{r_\infty - 1} t_{i/r_\infty} \zeta_\infty^i + t_0 \one + t_1 h
\end{equation*}
a formal point of $H^*_{CR,\C^*}(X)$.
The equivariant quantum cohomology ring of $X$, viewed as a
deformation of the usual Chen-Ruan cohomology ring parameterized by
$\t$ and a Novikov variable $q$, is semisimple.
In the case where $t_i=0$ for $i\neq 1$ (the small quantum
cohomology), it has been given an explicit description by
Milanov-Tseng \cite{MT}.
Specifically, let $f(x)$ be the Landau-Ginzburg mirror polynomial:
\begin{equation*}
  f(x) = e^{r_0 x} + q^{r_\infty} e^{r_\infty(t_1 - x)}+ \lambda(t_1 - x).
\end{equation*}
Then the small equivariant quantum cohomology ring of $X$ is
isomorphic to the ring generated by $e^{\pm x}$ modulo the
derivative of $f$, under the identification:
\begin{align*}
\zeta_0^i &\mapsto e^{ix}, &\zeta_{\infty}^i &\mapsto q^i e^{i(t_1 - x)}, \\
  \one &\mapsto 1,
  &h &\mapsto r_\infty q^{r_\infty} e^{r_\infty(t_1 - x)} + \lambda.
\end{align*}

\subsection{Localization}
\label{localization}

As in Section~\ref{StratFTs}, the equivariant Gromov--Witten theory of $X$ can be encoded in a strata-valued field theory via localization.  To make this precise, first recall that the fixed loci of the torus action on $\M_{g,n}(X,d)$ are indexed by certain decorated graphs $\G$.  The fixed locus associated to $\G$ parameterizes stable maps $f: C \rightarrow X$ for which:
\begin{itemize}
\item Edges of $\G$ correspond to components $C_e$ of $C$ not contracted by $f$.  Such components must be genus-zero Galois covers of $X$ ramified only over $0$ and $\infty$, and we denote by $d(e)$ the degree of the restriction of $f$ to $C_e$.
\item Vertices of $\G$ correspond (except in certain unstable cases) to subcurves $C_v$ of $C$ contracted by $f$, and we denote by $g(v)$ the genus of $C_v$.  Such a component must map to one of the fixed points of $X$, which we specify by $j(v) \in \{0,\infty\}$.
\item Legs of $\G$ correspond to marked points, and we denote by $\rho(l)$ the twisted sector in $X$ to which $f$ maps the marked point.
\end{itemize}
Let $h(v)$ denote the number of half-edges incident to a vertex $v$, and let $n(v)$ denote the number of legs.  There are three exceptional cases,
\[(g(v),h(v),n(v)) \in \{(0,1,0), (0,1,1), (0,2,0)\},\]
in which a vertex corresponds not to a contracted subcurve but to a single point of $C$.  In these situations, $v$ is referred to as {\it unstable}; otherwise, $v$ is {\it stable}.

Given a stable vertex $v$, the decorations on $\G$ determine the monodromy of the map at all special points of $C_v$ (see \cite[Lemma II.12]{Johnson}), and we encode these in a tuple
\[\rho(v) \in \{0, 1, \ldots, r_{j(v)}-1\}^{h(v) + n(v)}.\]
Note that, since orbifold maps $C \rightarrow B\Z_{r_{j(v)}}$ can be re-interpreted as $r_{j(v)}$-torsion orbifold line bundles, the moduli space $\M_{g(v), \rho(v)}(B\Z_{r_{j(v)}}, 0)$ parameterizing the contribution of $C_v$ is precisely the moduli space $\M^{0/r}_{g(v), A}$ considered above, with $A=\rho(v)$.

Let $V(\G)$ and $E(\G)$ denote the vertex and edge sets of $\G$, and denote
\[F_{\G}:= \prod_{v \in V(\Gamma) \text{ stable}} \M_{g(v), \rho(v)}(B\Z_{r_{j(v)}}, 0).\]
Then there is as canonical family of $\C^*$-fixed stable maps to $X$ over $F_{\G}$, yielding a finite morphism
\[j_{\G}: F_{\G} \rightarrow \M_{g,n}(X,d)\]
onto the fixed locus associated to $\G$.  Thus, applying the virtual localization formula, $[\M_{g,n}(X,d)]^{\vir}_{\C^*}$ can be expressed as a sum over decorated graphs $\G$ of contributions pushed forward from the moduli spaces $F_{\G}$.  Specifically, the calculations of Johnson in \cite{Johnson} show:
\begin{multline}
  \label{eq:locresult}
  [\M_{g,n}(X,d)]^{\vir}_{\C^*} = \\
  \sum_{\G} \frac{(j_{\G})_*}{|\Aut(\G)|} \left( \prod_{e \in E(\G)} C(e) \prod_{\substack{v \in V(\G) \\ \text{stable}}} C(v) \hspace{-8pt} \prod_{\substack{v \in V(\G) \\ (g,h,n) = (0,1,0)}} \hspace{-8pt} (-\psi_v) \prod_{\text{nodes}} \frac{\eta_{e, v}^{-1}}{-\psi - \psi'} \right).
\end{multline}
Here, setting $\lambda_0 := \lambda$ and $\lambda_{\infty} :=
-\lambda$, we denote
\[C(e) := \lambda_0^{-\left\lfloor \frac{d(e)}{r_0}\right\rfloor} \lambda_{\infty}^{-\left\lfloor \frac{d(e)}{r_{\infty}}\right\rfloor} \cdot \frac{d(e)^{\left\lfloor \frac{d(e)}{r_{0}}\right\rfloor +\left\lfloor \frac{d(e)}{r_{\infty}}\right\rfloor-1}}{\left\lfloor \frac{d(e)}{r_{0}}\right\rfloor!\left\lfloor \frac{d(e)}{r_{\infty}}\right\rfloor!}\]
and
\[ C(v) := \sum_{i=0}^\infty \left(\frac{\lambda_{j(v)}}{r_{j(v)}}\right)^{g(v) -1 + \iota(\rho(v)) - i} c_i(-R\pi_*\mathcal L),\]
where $\pi: \mathcal{C} \rightarrow \M_{g(v),\rho(v)} \left(B\Z_{r_{j(v)}}\right)$ is the universal curve, $\mathcal{L}$ is the universal $r_{j(v)}$-torsion line bundle, and
\[\iota(\rho(v)) := \sum_{a \in \rho(v)} \frac{a}{r_{j(v)}}.\]
In the third product, $\psi_v$ denotes
the equivariant cotangent line class of the coarse underlying curve--- i.e., $-\psi_v = \lambda_{j(v)}/d(e)$, where $e$ is the unique adjacent
edge. The last product is over nodes forced on the source curve by
$\G$, and $\psi$ and $\psi'$ are the equivariant
cotangent line classes of the coarse underlying curves joined by the
node, while
\begin{equation*}
  \eta_{e,v}^{-1}:=
  \begin{cases}
    \lambda_{j(v)} & \text{ if } d(e) \equiv 0 \mod r_{j(v)} \\
    r_{j(v)} & \text{ if } d(e) \not \equiv 0 \mod r_{j(v)}.
  \end{cases}
\end{equation*}

\subsection{The localization strata-valued field theory}
\label{notation}

Equipped with the results of Section~\ref{localization}, we are ready to more carefully define the strata-valued field theory $\Omega_{g,n}^{\text{loc},\t}$.

First, for $j \in \{0,\infty\}$ and $\rho_1, \ldots, \rho_n \in \{0,1,\ldots, r_j-1\}$, we define
\begin{equation}
 \label{ChFT}
  \Omega^{\Ch, j}_{g, n}(\zeta_j^{\rho_1} \otimes \dotsc \otimes \zeta_j^{\rho_n}) = \sum_{i = 0}^\infty \left(\frac{\lambda_j}{r_j}\right)^{g - 1 + \frac{\sum_{k = 1}^n \rho_k}{r_j}} \phi_*(c_i(-R\pi_*\mathcal L)),
\end{equation}
where
$\phi: \M_{g,(\rho_1, \ldots, \rho_n)}(B\Z_{r_j},0) \rightarrow
\M_{g,n}$ forgets the line bundle and the orbifold structure.
Using Chiodo's formula, one can view \eqref{ChFT} as an element of the
strata algebra, so extending multilinearly, it defines a strata-valued
field theory on $H_j$.
Define $\Omega^{\Ch} = \Omega^{\Ch,0} \oplus \Omega^{\Ch,\infty}$, a
strata-valued field theory on $H^*_{CR,\C^*}(X)$.

A CohFT $\Omega^{\t}_{g,n}$ encoding the equivariant Gromov--Witten
theory of $X$ can be defined by the same formula as in \eqref{Omegat}. 
Via \eqref{eq:locresult}, one can express $\Omega_{g,n}^{\t}$ in terms
of $\Omega^\Ch_{g,n}$, and this defines the lift
$\Omega^{\text{loc},\t}_{g,n}$ of $\Omega^{\t}_{g,n}$ to a
strata-valued field theory.

Toward proving the equality of the localization and reconstruction strata-valued field theories, we first observe that for each localization graph $\G$, there exists a dual graph $s(\G)$ recording the topological type of the stabilization of a generic source curve in the fixed locus associated to $\G$.  Thus, we can write
\[\Omega_{g,n}^{\text{loc},\t}(v_1 \otimes \cdots \otimes v_n) = \sum_{\Gamma \in G_{g,n}}  \frac{1}{|\text{Aut}(\Gamma)|}\sum_{\G \; | \; s(\G) = \Gamma} \text{Cont}_{\G},\]
where $\text{Cont}_{\G}$ is the contribution of $\G$ coming from \eqref{eq:locresult}.

For a fixed dual graph $\Gamma$, the localization graphs $\G$ with
$s(\G) = \Gamma$ are obtained by attaching an arbitrary number of
additional trees of rational curves at each vertex, replacing each leg
by a (possibly empty) tree containing the corresponding marked point,
and replacing each edge with a (possibly empty) tree of rational
curves.
After forming generating series for the contributions of each type of
tree, we can rewrite the contribution of a dual graph $\Gamma$ to
$\Omega_{g,n}^{\text{loc},\t}(v_1 \otimes \cdots \otimes v_n)$ (up to the factor of $|\Aut(\Gamma)|^{-1}$) as a
contraction of strata-valued tensors as follows:
\begin{itemize}
\item at each vertex of $\Gamma$, place the tensor
  $(\epsilon^{\text{pre}} \Omega^{\Ch})_{g(v), \text{val}(v)}$, where
  $\epsilon^{\text{pre}}(z)$ is the generating series of contributions
  of additional trees of rational curves to the localization formula, acting by translation on $\Omega^{\Ch}$;
\item at each leg $l$ of $\Gamma$, place $T^{\text{pre}}(\psi_l) v_l$,
  where $T^{\text{pre}}(z) v$ is the generating series of
  contributions of a tree of rational curves containing a marking with
  a $v$-insertion;
\item at each edge $e = (h, h')$ of $\Gamma$, place
  $E^{\text{pre}}(\psi_h, \psi_{h'})$, where $E^{\text{pre}}(z, w)$ is
  the generating series of contributions of a tree of rational curves
  connecting two stable vertices.
\end{itemize}

The strata-valued tensor $\Omega^{\Ch}_{g(v), \text{val}(v)}$ appearing at each vertex $v$, defined via Chiodo's formula, can be viewed as the result of the action of an $R$-matrix $R_{\Ch}$ on a topological field theory $\omega^{\Ch}$.  Plugging this expression into the above, one gets a sum
\begin{equation}
\label{eq:locconts}
\Omega_{g,n}^{\text{loc},\t}(v_1 \otimes \cdots \otimes v_n) = \sum_{\Gamma \in G_{g,n}} \sum_{\substack{v \in V(\Gamma)\\ \Gamma_v \in G_{g(v), \text{val}(v)}}} \frac{1}{|\text{Aut}(\Gamma')|} \text{Cont}_{\Gamma'},
\end{equation}
where $\Gamma_v$ is a choice of dual graph at the vertex $v$ and $\Gamma'$ is obtained from $\Gamma$ by replacing each $v$ by $\Gamma_v$.  Specifically, $\text{Cont}_{\Gamma'}$ is a contraction of strata-valued tensors as follows:
\begin{itemize}
\item at each vertex of $\Gamma$, place the tensor
  $(\epsilon \omega^\Ch)_{g(v), \text{val}(v)}$, where
  \begin{equation*}
    \epsilon(z) := R_\Ch^{-1}(z) \epsilon^{\text{pre}}(z) + z\one - zR_\Ch^{-1}(z) \one;
  \end{equation*}
\item at each leg $l$ of $\Gamma$, place $T(\psi_l) v_l$,
  where
  \begin{equation*}
    T(z) := R_\Ch^{-1}(z) T^{\text{pre}}(z);
  \end{equation*}
\item at each edge $e = (h, h')$ of $\Gamma$, place
  $E(\psi_h, \psi_{h'})$ where
  \begin{equation*}
    E(z, w) := (R_\Ch^{-1}(z) \otimes R_\Ch^{-1}(w)) E^{\text{pre}}(z, w) + \frac{\eta^{-1} - (R_\Ch^{-1}(z) \otimes R_\Ch^{-1}(w))\eta^{-1}}{z + w}.
  \end{equation*}
\end{itemize}

One important difference, at this point, between \eqref{eq:locconts} and the expression for $\Omega_{g,n}^{\text{rec},\t}$ is that $\epsilon(z)$ is not a multiple of $z^2$, unlike the series $z\one - z R^{-1}(z)\one$ appearing in the $R$-matrix action.  To correct for this difference, it is necessary to modify $\epsilon$ via the series
\begin{equation*}
  u := \sum_{n = 1}^\infty \int_{\M_{0, 2 + n}} \frac 1{n!} \omega^\Ch_{0, 2 + n}\left(\eta^{-1} \otimes \one \otimes \epsilon(\psi)^{\otimes n}\right) \in \mathcal H,
\end{equation*}
in which we use the shorthand $\mathcal{H} := H^*_{CR}(X)(\lambda)[[q,\t]]$.  We observe that, by an application of the string equation and the genus-zero topological recursion relations,
\begin{equation*}
  [e^{u/z} (\epsilon(z) - u)]_+ \in z \mathcal H[[z]],
\end{equation*}
where $[\; \cdot \;]_+$ denotes the truncation to non-negative powers of $z$.

More precisely, for each graph $\Gamma'$ as in \eqref{eq:locconts} and each vertex $v$ of $\Gamma'$, the contribution of $v$ to $\Omega^{\text{loc},\t}_{g,n}(v_1 \otimes \cdots \otimes v_n)$ is
\begin{equation*}
  \sum_{m = 0}^\infty \frac 1{m!} \pi_{m*} \omega^\Ch_{g(v), \text{val}(v) + m}(f_1(\psi_1) \otimes \dotsb \otimes f_{\text{val}(v)}(\psi_{\text{val}(v)}) \otimes \epsilon(\psi)^{\otimes m}),
\end{equation*}
by the definition of the translation action; here, the series $f_i(z)$ stands either for a series $T(z)$ or
``half'' of a series $E(z, w)$.
We split the shift by $\epsilon(z)$ into a shift by $\epsilon(z) - u$
and a shift by $u$:
\begin{equation*}
  \sum_{m, l = 0}^\infty \frac {\pi_{m*} \pi_{l*}}{m!l!}  \omega^\Ch_{g(v), \text{val}(v) + m + l}(f_1(\psi_1) \otimes \dotsb \otimes f_{\text{val}(v)}(\psi_{\text{val}(v)}) \otimes (\epsilon(\psi) - u)^{\otimes m} \otimes u^{\otimes l}).
\end{equation*}
Re-expressing the first $\text{val}(v) + m$ cotangent line classes in terms
of those pulled back via $\pi_{l*}$, we can rewrite the local
contribution of $v$ to $\Omega^{\text{loc},\t}_{g,n}(v_1 \otimes \cdots \otimes v_n)$ as
\begin{multline*}
  \sum_{m, l = 0}^\infty \frac{\pi_{m*} \pi_{l*}}{m!l!} \omega^\Ch_{g(v), \text{val}(v) + m + l}\left(\bigotimes_{i = 1}^n [e^{u/\widetilde\psi_i} f_i(\widetilde\psi_i)]_+ \otimes [e^{u/\widetilde\psi}(\epsilon(\widetilde\psi) - u)]_+^{\otimes m} \otimes u^{\otimes l}\right) \\
  = \sum_{m = 0}^\infty \frac{\pi_{m*}}{m!} \omega^\Ch_{g(v), \text{val}(v) + m}\left(\bigotimes_{i = 1}^n [e^{u/\psi_i} f_i(\psi_i)]_+ \otimes [e^{u/\psi}(\epsilon(\psi) - u)]_+^{\otimes m}\right),
\end{multline*}
in which $\widetilde{\psi}_i$ are the pulled-back $\psi$-classes.

Define a new topological field theory $\widetilde\omega$ by
\begin{equation*}
  \widetilde\omega_{g, n}(v_1 \otimes \dotsb \otimes v_n) := \sum_{m = 0}^\infty \frac {\pi_{m*}}{m!} \omega^\Ch_{g, n+m}(T_0 v_1 \otimes \dotsb \otimes T_0 v_n \otimes (\epsilon_1\psi)^{\otimes m}),
\end{equation*}
where $T_0$ is the $z^0$-coefficient of $e^{u/z} T(z)$ and $\epsilon_1$ is the
$z^1$-coefficient of $e^{u/z} \epsilon(z)$. Then, up to the factor of $|\Aut(\Gamma')|^{-1}$, the
contribution of $\Gamma'$ to $\Omega^{\t, \text{loc}}_{g,n}$ is equal to the following contraction of tensors:
\begin{itemize}
\item at each vertex of $\Gamma$, place the tensor
  $(\widetilde\epsilon \widetilde\omega)_{g(v), \text{val}(v)}$, where
  \begin{equation*}
    \widetilde\epsilon(z) := T_0^{-1}([e^{u/z}(\epsilon(z) - u)]_+ - \epsilon_1 z) \in z^2 \mathcal H[[z]];
  \end{equation*}
\item at each leg $l$ of $\Gamma$, place $\widetilde T(\psi_l) v_l$,
  where
  \begin{equation*}
    \widetilde T(z) := T_0^{-1} [e^{u/z} T(z)]_+;
  \end{equation*}
\item at each edge $e = (h, h')$ of $\Gamma$, place
  $E^{\text{pre}}(\psi_h, \psi_{h'})$, where
  \begin{equation*}
    \widetilde E(z, w) := (T_0^{-1} \otimes T_0^{-1}) [(e^{u/z} \otimes e^{u/w}) E(z, w)]_+.
  \end{equation*}
\end{itemize}
This now has exactly the shape of the definition of $\Omega^{\text{rec},\t}$, and all that remains is to match the ingredients $\widetilde\omega$, $\widetilde\epsilon$, $\widetilde T$ and
$\widetilde E$ with those appearing in the $R$-matrix action.

\subsection{Proof of Lemma \ref{comparison}}

The fact that $\widetilde\omega$ is the same as the topological field theory underlying $\Omega^{\text{rec}, \t}$ is immediate, since both are given by the degree-zero part of the same cohomology class $\Omega^{\t}_{g,n}(v_1 \otimes \cdots \otimes v_n)$.  To see that the series $\widetilde\epsilon$, $\widetilde T$ and
$\widetilde E$ are correct, we also use that $\Omega^{\text{loc},\t}$
agrees with $\Omega^{\text{rec},\t}$ after passing to cohomology.  This implies, by \cite[Lemma~2.2]{MOPPZ}, that $\widetilde T = R^{-1}$, where
$R$ is the $R$-matrix for $\Omega^\t$.

To show that $\widetilde\epsilon(z) = z(\one - R^{-1}(z)\one)$, consider the class
\begin{equation*}
  \Omega^\t_{g, 1}(v)|_{\mathcal{M}_{g, 1}},
\end{equation*}
where $v$ is an idempotent for the quantum product on $H^*_{CR}(X)$.  By Harer stability and \cite{PixtonThesis}, the tautological ring of
$\mathcal{M}_{g, 1}$ in degree less than $g/3$ has an additive basis given by
elements of the form
\begin{equation*}
  \psi_1^a \pi_{m*}(\psi_2^{p_1 + 1} \dotsb \psi_{m + 1}^{p_m + 1}),
\end{equation*}
where $m \geq 0$ and $p_1, \dotsc, p_m \ge 1$.
For any $d \geq 0$ and any idempotent vector $v$, we can choose
$g > 3d$, and then the $z^d$-coefficient in both
$\eta(\widetilde\epsilon(z), v)$ and
$\eta(z(\one - R^{-1}(z)\one), v)$ is equal, up to a common nonzero
constant of $\widetilde\omega_{g, 2}(v, v)$, to the coefficient of
$\pi_{1*}(\psi_2^{d+1})$ in the expression for
$\Omega^{\t}_{g,1}(v)|_{\mathcal{M}_{g,1}}$ in the above basis. 
Thus, we indeed must have
$\widetilde\epsilon(z) = z(\one - R^{-1}(z)\one)$.

Finally, we show that 
\begin{equation}
\label{eq:Es}
\widetilde E(z, w) = \frac{\eta^{-1} - (R^{-1}(z) \otimes R^{-1}(w)) \eta^{-1}}{z + w}.
\end{equation}
Denote the left-hand and right-hand sides of \eqref{eq:Es} by $E_1(z,w)$ and $E_2(z,w)$, respectively, and for $i \in \{1,2\}$, write
\begin{equation*}
  E_i(z, w) =: \sum_{j, k = 0}^\infty E_i^{jk} z^j w^k.
\end{equation*}
We show by induction on $\max(j, k)$ that
$\eta(E_1^{jk}, v_1 \otimes v_2) = \eta(E_2^{jk}, v_1 \otimes v_2)$
for any $j,k$ and any idempotent vectors $v_1$ and $v_2$.
Let us consider, for any $g > 3j$, the coefficient of $\psi_2^j$ in
\begin{equation}
\label{eq:Mg2}
  \left(\pi_{(k+1)*} \Omega^\t_{g, k + 3}(v_1 \otimes v_2^{\otimes k + 2})\right)\big|_{\mathcal{M}_{g, 2}}.
\end{equation}
If one expresses $\Omega^{\t}_{g,k+3}(v_1  \otimes v_2^{\otimes k+2})$ as a dual graph sum (with either $E_1$ or $E_2$ as the edge tensor), then the only graphs contributing to \eqref{eq:Mg2} are those of rational tails type.  By the induction hypothesis, the coefficient of $\psi_2^j$ in the contribution of such a graph is independent of whether the edge tensor $E_1$ or $E_2$ is used, except possibly in the case of the graph with a genus-$g$ vertex connected by a single edge to a genus-zero vertex containing
all except the first marking.  The coefficient of $\psi_2^j$ in the contribution of this graph is
\begin{equation*}
  \eta(E_i^{jk}, v_1 \otimes v_2) \widetilde\omega_{g, 2}(v_1^{\otimes 2}) \widetilde\omega_{0, k+3}(v_2^{\otimes k+3}),
\end{equation*}
so we must also have
$\eta(E_1^{jk}, v_1 \otimes v_2) = \eta(E_2^{jk}, v_1 \otimes v_2)$.  This completes the induction step and hence the proof of Lemma~\ref{comparison}.

\bibliographystyle{abbrv}
\bibliography{DRBib}

\end{document}